%
%

\newif\ifextras
\ifextras
\fi 


\documentclass{amsart}

\usepackage{amsmath}
\usepackage{amsthm}
\usepackage{amssymb}
\usepackage{enumerate}
\usepackage{xcolor}
\usepackage{graphicx}
\usepackage{caption}
\usepackage{subcaption}
\usepackage{wrapfig}
\input{insbox}
\usepackage{hyperref}
\hypersetup{
    colorlinks=true,
    linkcolor=black,
    filecolor=magenta,      
    urlcolor=blue,
}
\urlstyle{same}

\newtheorem{definition}{Definition}[section]
\newtheorem{theorem}[definition]{Theorem}
\newtheorem{lemma}[definition]{Lemma}
\newtheorem{corollary}[definition]{Corollary}

\theoremstyle{definition}
\newtheorem{remark}[definition]{Remark}
\newtheorem{construction}[definition]{Construction}

\newcommand{\Leb}{\mathrm{Leb}}

\newcommand{\Mp}{\mathcal{M}_{1}}
\newcommand{\Mfin}{\mathcal{M}_{fin}}
\newcommand{\B}{\mathcal{B}}

\newcommand{\CI}{\mathrm{C}(\mathrm{I})}
\newcommand{\CR}{\mathrm{C}(\mathbb{R})} 
\newcommand{\RDSI}{\mathrm{RDS}(\mathrm{I})}
\newcommand{\RDSR}{\mathrm{RDS}(\mathbb{R})} 
\newcommand{\XI}{\mathrm{X}(\mathrm{I})}

\newcommand{\NMalpha}{\mathcal{N}_{M, \alpha}}

\newcommand{\id}{\mathop{\mathrm{id}}}
\newcommand{\supp}{\mathop{\mathrm{supp}}}
\newcommand{\Orb}{\mathop{\mathrm{Orb}}}

\begin{document}

\title{Typical behaviour of random interval homeomorphisms}

\author{Jaroslav Brad\'ik}
\address{Mathematical Institute, Silesian University in Opava, Na Rybničku 1, 74601, Opava, Czech Republic}
\email{jaroslav.bradik@math.slu.cz}

\author{Samuel Roth}
\address{Mathematical Institute, Silesian University in Opava, Na Rybničku 1, 74601, Opava, Czech Republic}
\email{samuel.roth@math.slu.cz}

\keywords{Random dynamical systems, interval homeomorphisms, singular stationary measures, residual set}


\makeatletter
\@namedef{subjclassname@2020}{%
  \textup{2020} Mathematics Subject Classification}
\makeatother
\subjclass[2020]{Primary: 37C20, 37H12, 37E05, 37H15}

\thanks{Research was funded by institutional support for the development of research organizations (I\v{C}47813059) and by grant SGS 18/2019}

\begin{abstract}
We consider the typical behaviour of random dynamical systems of order-preserving interval homeomorphisms with a positive Lyapunov exponent condition at the endpoints. Our study removes any requirement for continuous differentiability save the existence of finite derivatives of the homeomorphisms at the endpoints of the interval. We construct a suitable Baire space structure for this class of systems. Generically within this Baire space, we show that the stationary measure is singular with respect to the Lebesgue measure, but has full support on $[0,1]$. This provides an answer to a question raised by Alsed\`a and Misiurewicz.
\end{abstract}

\maketitle


\section{Introduction}

In \cite{AlsedaMisiurewicz}, the authors raise the question of properties of stationary measures for typical random dynamical systems of interval homeomorphisms. First, let's examine the possible meaning of a \emph{typical} random dynamical system. One viewpoint could be that the typical systems are the ones which form a residual subset of a Baire space of all the random dynamical systems under consideration. Another viewpoint is to enlarge the class of random dynamical systems under consideration to include as many systems as possible by imposing as few constraints on it as possible. We adopt both viewpoints and construct a Baire space of random dynamical systems with a limited number of constraints and look for a residual subset of this space.

Let us summarize the question raised in \cite{AlsedaMisiurewicz} and the attempts to answer it so far. The random iterated function systems in \cite{AlsedaMisiurewicz} use piecewise linear homeomorphisms of $[0,1]$ onto itself with common fixed points at $0$ and $1$. Clearly these systems carry trivial stationary measures concentrated at the fixed endpoints. However, under a positive Lyapunov exponent condition (which makes both endpoints repelling on average), the authors prove the existence and uniqueness of a stationary measure $\mu$ on the open interval $(0,1)$. The conjecture made in \cite{AlsedaMisiurewicz} is that for the “typical” system, the stationary measure $\mu$ is singular. 

A partial solution is given in \cite{BaranskiSpiewak}, where the authors continue to work with piecewise linear homeomorphisms and prove singularity of $\mu$ in the so-called resonant case. Moreover, they give a specific example of a resonant system for which $\mu$ is not only singular, but has full support in $[0,1]$. 

In \cite{GharaeiHomburg}, the existence and uniqueness of a stationary measure $\mu$ on $(0,1)$ is proved for a different class of iterated function systems of the form $f = (f_0, f_1, p)$, where $f_0$, $f_1$ are twice continuously differentiable homeomorphisms on the unit interval such that $f_0(x) < x, f_1(x) > x$ on $(0, 1)$, and $p \in (0, 1)$ determines the probabilities, i.e. $f_0$ is chosen with probability $p$ and $f_1$ is chosen with probability $1-p$. One of the results is that for $f$, there is a unique ergodic stationary measure $\mu_f$  with $\mu_f(\{0\} \cup \{1\}) = 0$. 

The class of systems is broadened in \cite{CzudekSzarek} to allow for random iterated function systems of the form $f = (\{f_i\}, p)$, $i = \{1, .., k\}$, where each $f_i$ is an increasing homeomorphism on the unit interval, continuously differentiable on $[0, \beta]$ and $[1-\beta, 1]$ for some fixed $\beta > 0$, and for all $x \in (0, 1)$ there exist indices $i \neq j$ such that $f_i(x) < x < f_j(x)$. The probability vector $p$ is a $k$-tuple of numbers from $(0, 1)$ such that $\sum_{i=1}^{k} p_i = 1$ and each function $f_i$ is chosen with probability $p_i$. As before, the positivity of Lyapunov exponents is still required. Although the authors of \cite{CzudekSzarek} are focused on a central limit theorem, along the way they prove the existence and uniqueness of a stationary measure $\mu$ on $(0, 1)$, as well as the additional result that $\mu$ is atomless.

The same class of systems is considered in \cite{CzernousSzarek}. At the outset, some fixed $\beta > 0$ is chosen. Then a Baire space of random iterated systems which are $C^1$-smooth on the $\beta$-neighborhoods of the endpoints is constructed by taking the closure of the subset of systems for which all $f_i$ are absolutely continuous. In this Baire space, a residual set of systems is found with a unique stationary measure on $(0, 1)$ and this measure is singular.

In one sense, we work with larger class of systems by dropping the requirements of piecewise linearity of \cite{BaranskiSpiewak}, $C^2$-smoothness  of \cite{GharaeiHomburg}, and continuous differentiability in some fixed $\beta$-neighborhoods of the endpoints of \cite{CzernousSzarek} and \cite{CzudekSzarek}. We assume only differentiability at the endpoints, which is needed to test for positive Lyapunov exponents. For this reason, our construction of a Baire space is different. On the other hand, we restrict our considerations to systems consisting of only two increasing homeomorphisms, one below and the other above the diagonal.

The tool to construct the Baire space is a conjugacy inspired by \cite{AlsedaMisiurewicz} which carries our systems from the \emph{open} interval $(0,1)$ (where all the interesting dynamics happen) to $\mathbb{R}$ in such a way that the derivatives at 0, 1 correspond to limiting distances from the diagonal at $\pm\infty$, see Lemma~\ref{Lemma:Limits_equal_to_log_of_derivatives}. Using the supremum distance from $\mathbb{R}$ we are able to define a complete metric on a suitable space of interval maps even though we only require differentiability at the two endpoints\footnote{Note that differentiability at a point is not preserved under uniform limits, so we cannot use the usual supremum distance on $[0,1]$ for our space of functions.}, see Definition~\ref{Def:CI}. This allows us to identify a Baire space structure, see Definition~\ref{Def:RDS} and Theorem~\ref{Th:RDSI_is_a_Baire_space}.

Continuing to work in $\mathbb{R}$ we find dense (but not a priori $G_\delta$) subsets in our space of random dynamical systems for which the stationary measure $\mu$ identified in \cite{CzudekSzarek} is necessarily singular (Corollary~\ref{Cor:Dense_set_with_singular_measure}) or has full support (Theorem~\ref{Th:Dense_set_with_full_support}). Key is a construction of perturbations of iterated function systems for which the stationary measure is supported on a countable union of Cantor sets (Construction~\ref{Rem:Construction_of_g}). In Section~\ref{sec:continuity} we adapt a proof from \cite{CzernousSzarek} to show that in the chosen topology, the stationary measure $\mu$ depends continuously on the random dynamical system. Then it follows from a result of Lenz and Stollmann \cite{LenzStollmann} that some of the dense sets identified in Section~\ref{sec:dense_subsets} are automatically $G_\delta$ sets. This allows us to conclude in Theorem~\ref{Th:Main_result} that a \emph{typical random dynamical system in our Baire space has unique stationary measure which is singular, non-atomic and has full support}. 

\section{Space of random dynamical systems}

We use a homeomorphism from \cite{AlsedaMisiurewicz} to define an unbounded metric on the open unit interval equivalent to the Euclidean one. 

\begin{definition}\label{Def:h}
Let $h: (0, 1) \to \mathbb{R}$ be defined by
$$h(x) = \begin{cases}
\log(x) - \log\left(\frac{1}{2}\right), & x \in \left(0, \frac{1}{2}\right], \\
\log\left(\frac{1}{2}\right) - \log(1-x), & x \in \left[\frac{1}{2}, 1\right).
         \end{cases}$$
The mapping $h$ induces metric $d_h$ on $(0, 1)$ by 
$$d_h(x, y) = |h(x) - h(y)| = \begin{cases}
  |\log(x) - \log(y)|,    & x, y \in \left(0, \frac{1}{2}\right], \\
  |\log(1-x) - \log(1-y)|, & x, y \in \left[\frac{1}{2}, 1\right),
  \end{cases}$$
and $d_h(x, y) = d_h\left(x, \frac{1}{2}\right) + d_h\left(y, \frac{1}{2}\right)$ for any other case. 
\end{definition}

Let $d(x, y) = |x - y|$ denote the Euclidean metric on $(0, 1)$. Then the reader can easily verify the following inequality
\begin{equation}\label{EQ:property_of_d_h}
d(x,y) \leq d_h(x, y),\quad \forall x, y \in (0, 1).
\end{equation}

\begin{definition}\label{Def:CI}
Let $\CI$ be the set of continuous functions $f$ from the unit interval $I=[0,1]$ onto itself such that:\begin{enumerate}[(i)]
  \item $f(0) = 0$ and $f(1) = 1$,
  \item $f((0, 1)) \subset (0, 1)$,
  \item $f'(0), f'(1)$ both exist and are strictly positive, that is $f'(0), f'(1) \in (0, \infty)$.\end{enumerate}
Define the distance between two functions from $\CI$ as the supremum distance on the open interval $(0,1)$ using the (unbounded) metric $d_h$.
$$d_{\CI}(f, g) = \sup_{x \in (0, 1)} \{ d_h(f(x), g(x))\}.$$
\end{definition}

We will show in Corollary~\ref{Cor:CI_is_complete} that this distance is always finite.

\begin{definition}\label{Def:RDS}
Let $\RDSI$ be the set of triples $f = (f_0, f_1, p_f) \in \CI \times \CI \times [0, 1]$ such that the following conditions hold:
\begin{enumerate}[(i)]
  \item $f_0$ and $f_1$ are homeomorphisms,
  \item $f_0(x) < x,\ \forall x \in (0, 1)$ (below diagonal condition),
  \item $f_1(x) > x,\ \forall x \in (0, 1)$ (above diagonal condition),
  \item $p_f \in (0, 1)$,
  \item Lyapunov exponents are positive, that is for $j \in \{0, 1\}$
  $$\Lambda_j = p_f \log f'_0(j) + (1-p_f) \log f'_1(j) > 0.$$ 
\end{enumerate}
Equip $\RDSI$ with the maximal metric
\begin{equation}\label{eq:d_m} 
d_m(f, g) = \max \{ d_{\CI}(f_0, g_0), d_{\CI}(f_1, g_1), |p_f-p_g| \}.
\end{equation}
\end{definition}

Condition (i) implies that $f_0$ and $f_1$ are strictly increasing functions since $f_0 \in \CI$ imposes the conditions $f_0(0) = 0$ and $f_0(1) = 1$, respectively for $f_1$. The space $\CI$ only admits functions with strictly positive finite derivatives at the endpoints and therefore absolute values in the definition of the Lyapunov exponents are omitted. It will be useful to make the following agreement: whenever we say $f \in \RDSI$, we consider $f$ to be a triple $(f_0, f_1, p_f)$.

We will use the homeomorphism $h$ to transfer functions between the interval and the real line. Lowercase letters will denote functions on $I$, while the corresponding uppercase letters will denote their respective images on $\mathbb{R}$. Let $\CR$ be the set of continuous functions $F$ from the real line onto itself such that $\lim_{x \to -\infty} (F(x) - x)$ and $\lim_{x \to \infty} (F(x) - x)$ both exist and are finite.

\begin{lemma}\label{Lemma:Limits_equal_to_log_of_derivatives}
The correspondence $F = \phi(f) = h \circ f\left|_{(0 ,1)}\right. \circ h^{-1}$ defines a bijection $\phi: \CI \to \CR$ and 
\begin{align}
\lim_{x \to -\infty}(F(x) - x) &= \log(f'(0)), \label{Eq:limits_to_derivatives_at0}\\
\lim_{x \to \infty}(F(x)-x) &= -\log(f'(1)).\label{Eq:limits_to_derivatives_at1}
\end{align}
\end{lemma}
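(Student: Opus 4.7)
The strategy is to verify the two limit formulas first — this will simultaneously confirm that $\phi$ actually maps $\CI$ into $\CR$ — and then to establish the bijection by producing an explicit inverse.

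First, for any $f \in \CI$, the restriction $f|_{(0,1)}$ is a continuous surjection $(0,1) \to (0,1)$: surjectivity holds because $f([0,1]) = [0,1]$ while conditions (i) and (ii) in Definition~\ref{Def:CI} together imply $f^{-1}(\{0,1\}) \cap (0,1) = \emptyset$. Since $h$ is a homeomorphism, $F = h \circ f|_{(0,1)} \circ h^{-1}$ is then a continuous surjection $\mathbb{R} \to \mathbb{R}$. To establish \eqref{Eq:limits_to_derivatives_at0}, observe that by continuity of $f$ at $0$, for $y$ sufficiently close to $0$ both $y$ and $f(y)$ lie in $(0, 1/2]$, so by Definition~\ref{Def:h}
\[ h(f(y)) - h(y) = \log(f(y)) - \log(y) = \log\frac{f(y)}{y}. \]
Substituting $y = h^{-1}(x)$ (so that $x \to -\infty$ corresponds to $y \to 0^+$) and using $f(0) = 0$ to recognize $f(y)/y \to f'(0)$, we obtain $F(x) - x \to \log f'(0)$. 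An entirely analogous computation near $1$, using instead the second branch of $h$, yields \eqref{Eq:limits_to_derivatives_at1}; together these confirm $F \in \CR$.

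For injectivity of $\phi$, if $\phi(f) = \phi(g)$ then $f = g$ on $(0,1)$, and continuity together with $f(0) = g(0) = 0$, $f(1) = g(1) = 1$ forces $f = g$ on $[0,1]$. For surjectivity, given $F \in \CR$, define $f: [0,1] \to [0,1]$ by $f(y) = h^{-1}(F(h(y)))$ for $y \in (0,1)$ and $f(0) = 0$, $f(1) = 1$. Then $f$ is continuous on $(0,1)$, maps $(0,1)$ onto $(0,1)$, and is onto $[0,1]$. The substantive task is to check that this extension lies in $\CI$: continuity and positive finite derivatives at the endpoints. Writing $L_0 = \lim_{x \to -\infty}(F(x) - x)$, for $y$ sufficiently small we have $F(h(y)) = h(y) + L_0 + o(1)$ which tends to $-\infty$, so the formula $h^{-1}(x) = \tfrac{1}{2} e^x$ valid for $x \leq 0$ gives
\[ \frac{f(y)}{y} = \frac{\tfrac{1}{2}\, e^{h(y) + L_0 + o(1)}}{y} = \frac{\tfrac{1}{2}\cdot 2y \cdot e^{L_0 + o(1)}}{y} = e^{L_0 + o(1)} \longrightarrow e^{L_0}. \]
Hence $f$ is continuous at $0$ with $f'(0) = e^{L_0} \in (0, \infty)$. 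A symmetric computation using the other branch of $h^{-1}$ shows $f$ is continuous at $1$ with $f'(1) = e^{-L_1} \in (0, \infty)$, where $L_1 = \lim_{x \to \infty}(F(x) - x)$. So $f \in \CI$ and $\phi(f) = F$ by construction.

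The main obstacle is the last step: translating the purely asymptotic hypothesis on $F(x) - x$ at $\pm\infty$ into the genuine existence of one-sided derivatives $f'(0), f'(1)$ and their strict positivity. This forces careful case analysis according to the branches in Definition~\ref{Def:h} for $h$ and $h^{-1}$, but once done the two constructions are by inspection mutually inverse, yielding the bijection and completing the proof.
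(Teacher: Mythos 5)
Your proposal is correct and follows essentially the same route as the paper: verify the limit formulas by exploiting that $h$ turns the ratio $f(y)/y$ near $0$ into the difference $F(x)-x$ near $-\infty$, then exhibit the explicit inverse $F \mapsto h^{-1}\circ F\circ h$ extended by fixing the endpoints, and check it lands in $\CI$. Your direct identity $F(x)-x=\log\bigl(f(y)/y\bigr)$ is a marginally slicker packaging of the paper's $\epsilon$-sandwich between linear maps, but the underlying argument is the same.
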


\begin{proof}
\underline{Step 1}: 
Let $f \in \CI$ and $F: \mathbb{R} \to \mathbb{R}$ be defined by $F = h \circ f\left|_{(0, 1)}\right. \circ h^{-1}$. We will prove that $F \in \CR$ and that equations~(\ref{Eq:limits_to_derivatives_at0}) and (\ref{Eq:limits_to_derivatives_at1}) hold.

The function $F$ is clearly continuous. For the derivative at zero, let 
$$a = f'(0) = \lim_{x \to 0^{+}} \frac{f(x)}{x}.$$
For given $\epsilon > 0$ there exists $\delta > 0$ such that for all $x \in (0, \delta)$ it holds that $|f(x) - ax| < \epsilon x$ and therefore
\begin{equation}\label{Eq:epsilon_f_x}
(a-\epsilon)x < f(x) < (a+\epsilon)x.
\end{equation}
The linear function $x \mapsto lx$ on $I$ near zero when translated to $\mathbb{R}$ becomes the function
$$L(x) = \log \left( l \frac{e^x}{2} \right) - \log \left( \frac{1}{2} \right) = x + \log(l),$$
hence condition (\ref{Eq:epsilon_f_x}) translates to $\CR$ as
$$\log(a-\epsilon) + x < F(x) < \log(a+\epsilon) + x,$$
$$\log(a-\epsilon) < F(x) - x < \log(a+\epsilon),$$
and by making $\epsilon$ arbitrarily small, we have $F(x) - x \to \log(a)$ as $x \to -\infty$.

For the derivative at the other endpoint, the argument is similar.

\underline{Step 2}: 
Conversely, let $F \in \CR$ and $f: [0, 1] \to [0, 1]$ be defined by 
\begin{equation}\label{Eq:definition_of_psi}
f(x) = \begin{cases}
0, & x = 0, \\
h^{-1} \circ F \circ h, & x \in (0, 1), \\
1, & x = 1.
\end{cases}
\end{equation}
The continuity of $f$ is clear including an easy verification at the endpoints. We will prove that $f \in \CI$ and that $f'(0) = e^{\lim_{x \to -\infty}(F(x) - x)}$. 

Let $\lim_{x \to -\infty}(F(x) - x) = \log(l)$ that is $l = e^{\lim_{x \to -\infty}(F(x) - x)}$. For given $\epsilon > 0$ there exists $R < 0$ such that $\forall x < R$ the following hold
  $$-\epsilon < (F(x) - x) - \log(l) < \epsilon,$$
  $$-\epsilon + x + \log(l) < F(x) < \epsilon + x + \log(l),$$
  and by transferring the above to the space $\CI$ we get for all $x < h^{-1}(R)$,
    \begin{gather*}
e^{\log\left(\frac{1}{2}\right) - \epsilon + \log(x) - \log\left(\frac{1}{2}\right) + \log(l)} < f(x) < e^{\log\left(\frac{1}{2}\right) + \epsilon + \log(x) - \log\left(\frac{1}{2}\right) + \log(l)},\\
  l e^{-\epsilon} < \frac{f(x)}{x} < l e^{\epsilon},
  \end{gather*}
  therefore for the derivative of $f$ at zero as $\epsilon \to 0$ we get 
  $$f'(0) = \lim_{x \to 0^{+}} \frac{f(x)}{x} = l = e^{\lim_{x \to -\infty}(F(x) - x)}.$$
A simple repetition of the argument shows that $f'(1)=e^{-\lim_{x \to \infty}(F(x) - x)}$.

\underline{Step 3}: The operator described in step 2 serves as an inverse to $\phi$, so $\phi$ is a bijection.
\end{proof}

Recall that a topological space is called a \emph{Baire space} if the intersection of any countable family of open dense subsets is dense. The following facts are used: by the \emph{Baire Category Theorem}, a complete metric space is a Baire space in the topology induced by the metric, and any $G_{\delta}$ subset (in particular any closed subset) of a complete metric space is a Baire space in the subspace topology.

\begin{corollary}\label{Cor:CI_is_complete}
The space $(\CI, d_{\CI})$ is a complete metric space.
\end{corollary}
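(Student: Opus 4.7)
\emph{Proof plan.} The natural approach is to transfer the problem via the bijection $\phi$ of Lemma~\ref{Lemma:Limits_equal_to_log_of_derivatives} to $\CR$ equipped with the uniform metric $\sup_{y \in \mathbb{R}} |F(y) - G(y)|$. First I would note that the two cases in the definition of $d_h$ actually agree: since $h(1/2) = 0$ with $h < 0$ on $(0, 1/2)$ and $h > 0$ on $(1/2, 1)$, one checks that $d_h(x, y) = |h(x) - h(y)|$ holds for \emph{all} $x, y \in (0, 1)$. Substituting $y = h(x)$ in the definition of $d_{\CI}$ then gives
$$d_{\CI}(f, g) = \sup_{y \in \mathbb{R}} |F(y) - G(y)|, \qquad F := \phi(f),\ G := \phi(g),$$
so $\phi$ is an isometry from $(\CI, d_{\CI})$ onto $\CR$ equipped with its uniform metric. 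In particular, $d_{\CI}(f, g)$ is finite because $F(y) - G(y) = (F(y) - y) - (G(y) - y)$ is the difference of two continuous functions on $\mathbb{R}$, each of which has finite limits at $\pm\infty$ by definition of $\CR$ and is therefore bounded.

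For completeness itself, I would take a $d_{\CI}$-Cauchy sequence $\{f_n\} \subset \CI$ and set $F_n := \phi(f_n)$. By the isometry, $\{F_n\}$ is uniformly Cauchy on $\mathbb{R}$, hence it converges uniformly to some continuous $F \colon \mathbb{R} \to \mathbb{R}$. The task is then to show $F \in \CR$, i.e.\ that $F$ is surjective and that $\lim_{y \to \pm\infty}(F(y) - y)$ exist and are finite. Denoting the corresponding limits for $F_n$ by $L_n^\pm$, I would pass to the limit $y \to \pm\infty$ in the Cauchy condition $\sup_y |F_n(y) - F_m(y)| < \varepsilon$ to obtain $|L_n^\pm - L_m^\pm| \le \varepsilon$, so $\{L_n^\pm\}$ are Cauchy in $\mathbb{R}$ and converge to some limits $L^\pm$. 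A standard three-$\varepsilon$ argument then yields $F(y) - y \to L^\pm$ as $y \to \pm\infty$; surjectivity of $F$ follows from the intermediate value theorem, since boundedness of $F(y) - y$ forces $F(y) \to \pm\infty$ at $\pm\infty$. Thus $F \in \CR$, and $f := \phi^{-1}(F) \in \CI$ is the $d_{\CI}$-limit of $\{f_n\}$.

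The only real obstacle is the interchange-of-limits step in showing that the existence of finite limits at $\pm\infty$ is preserved under uniform convergence; this is routine via the three-$\varepsilon$ argument sketched above. Once this is handled, the result reduces to the classical completeness of the space of continuous functions on $\mathbb{R}$ under uniform convergence, via the isometry provided by $\phi$.
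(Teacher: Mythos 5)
Your proposal is correct and follows essentially the same route as the paper: identify $d_{\CI}(f,g)$ with $\sup_{y\in\mathbb{R}}|F(y)-G(y)|$ via the isometry $\phi$, and reduce to completeness of $\CR$ in the uniform metric. The only difference is cosmetic — the paper cites the closedness of the subspace of functions with finite limits at $\pm\infty$ as a known fact (after composing with the isometry $F\mapsto F-\id$), whereas you prove it directly with the three-$\varepsilon$ argument and also note explicitly that surjectivity of the limit follows from the intermediate value theorem.
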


\begin{proof}
It is a known fact that the space of continuous bounded real-valued functions on $\mathbb{R}$ equipped with the supremum metric is a complete metric space. The functions which have finite limits at both $\pm\infty$ form a closed subspace there, hence are again a complete metric space. But $\CR$ was just defined as those functions for which $F(x)-x$ has finite limits at both $\pm\infty$. The operator $F\mapsto (F-\id)$ is an isometry, so $\CR$ is a complete metric space in the supremum metric. We showed in Lemma~\ref{Lemma:Limits_equal_to_log_of_derivatives} that $\phi:\CI\to\CR$ is a bijection, and recalling Definition~\ref{Def:CI} we have
$$d_{\CI}(f, g) = \sup_{x \in (0, 1)} \{ d_h(f(x), g(x))\} = \sup_{x\in\mathbb{R}} |F(x)-G(x)|,$$
where $F, G$ are the respective images of $f, g$ under $\phi$. This shows that $d_{\CI}$ is a well-defined metric (it is finite), and that $\phi$ is an isometry.
\end{proof}

\begin{corollary}\label{Cor:xi_is_continuous_function}
The mappings $\xi_0, \xi_1: \CI \to (0, \infty)$ defined by $\xi_0(f) = f'(0)$ and $\xi_1(f) = f'(1)$ are continuous.
\end{corollary}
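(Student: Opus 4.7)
The plan is to leverage the isometry $\phi:\CI\to\CR$ established (implicitly in Lemma~\ref{Lemma:Limits_equal_to_log_of_derivatives} and explicitly in Corollary~\ref{Cor:CI_is_complete}) together with the fact that the endpoint derivatives are expressible as limits at $\pm\infty$ of the $(F-\id)$ functionals. Specifically, by Lemma~\ref{Lemma:Limits_equal_to_log_of_derivatives},
$$\xi_0(f) = \exp\Bigl(\lim_{x\to-\infty}(F(x)-x)\Bigr), \qquad \xi_1(f) = \exp\Bigl(-\lim_{x\to\infty}(F(x)-x)\Bigr),$$
where $F=\phi(f)$. So it suffices to show that the two functionals $F\mapsto \lim_{x\to\pm\infty}(F(x)-x)$ are continuous on $\CR$ with respect to the supremum metric; continuity of $\xi_0,\xi_1$ then follows by composing with $\exp$ (or $x\mapsto e^{-x}$).

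Next I would verify that these functionals are in fact $1$-Lipschitz. Given $F,G\in\CR$, set $a_F=\lim_{x\to-\infty}(F(x)-x)$ and $a_G=\lim_{x\to-\infty}(G(x)-x)$. Since both limits exist,
$$a_F - a_G = \lim_{x\to-\infty}\bigl((F(x)-x) - (G(x)-x)\bigr) = \lim_{x\to-\infty}(F(x)-G(x)),$$
and therefore
$$|a_F - a_G| \le \sup_{x\in\mathbb{R}}|F(x)-G(x)|.$$
By the isometry, the right-hand side equals $d_{\CI}(f,g)$, proving that $f\mapsto a_F$ is $1$-Lipschitz from $\CI$ to $\mathbb{R}$. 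The same argument handles the $+\infty$ limit.

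Finally, composing with the continuous map $t\mapsto e^t$ (respectively $t\mapsto e^{-t}$) gives continuity of $\xi_0$ and $\xi_1$ into $(0,\infty)$. There is no substantive obstacle here; the only point requiring a moment's care is that the limits at $\pm\infty$ of $F-\id$ are genuine (finite) limits, guaranteed by the definition of $\CR$, so that interchanging the limit with the difference in the displayed calculation above is legitimate.
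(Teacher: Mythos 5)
Your proposal is correct and follows essentially the same route as the paper: both exploit the isometry $\phi$ and the identities of Lemma~\ref{Lemma:Limits_equal_to_log_of_derivatives} to see that the log-derivatives at the endpoints are $1$-Lipschitz functionals of $f$ in the $d_{\CI}$ metric, and then exponentiate. The paper merely states the resulting multiplicative bounds $g'(0)\in(e^{-\epsilon}f'(0),e^{\epsilon}f'(0))$ directly, while you spell out the Lipschitz estimate; the content is the same.
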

\begin{proof}
Using Lemma~\ref{Lemma:Limits_equal_to_log_of_derivatives} it follows that given $f \in \CI$, for every $g$ in the $\epsilon$-ball $B_{\epsilon}(f)$ we have
$g'(0) \in (e^{-\epsilon}f'(0), e^{\epsilon}f'(0))$ and $g'(1) \in (e^{-\epsilon}f'(1), e^{\epsilon}f'(1)).$
\end{proof}

The above two corollaries are the very reason for choosing the specific homeomorphism $h$ between the real line and the open unit interval rather then some other homeomorphism. The resulting metric $d_{\CI}$ can ``see" the derivatives at the endpoints.

\begin{theorem}\label{Th:RDSI_is_a_Baire_space}
The space $(\RDSI, d_m)$ is a Baire space in the topology induced by the metric.
\end{theorem}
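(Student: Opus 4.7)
The plan is to realize $\RDSI$ as a $G_\delta$ subset of the complete metric space $X = \CI \times \CI \times [0,1]$ equipped with the max metric $d_m$, and then invoke the fact recalled just before Corollary~\ref{Cor:CI_is_complete} that a $G_\delta$ subset of a complete metric space is a Baire space in the subspace topology. Completeness of $X$ is automatic: $(\CI, d_{\CI})$ is complete by Corollary~\ref{Cor:CI_is_complete}, $[0,1]$ is compact hence complete, and a finite product of complete metric spaces under the maximum metric is complete.

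Next I would verify that each of the five conditions in Definition~\ref{Def:RDS} carves out an open or $G_\delta$ subset of $X$. For (i), since $f_j \in \CI$ is continuous with $f_j(0)=0$ and $f_j(1)=1$, being a homeomorphism of $[0,1]$ onto itself is equivalent to strict monotonicity, which by continuity is in turn equivalent to $f_j(q_1) < f_j(q_2)$ for every rational pair $q_1 < q_2$ in $(0,1)$. Each such pointwise inequality is an open condition because the evaluation $f \mapsto f(q)$ is continuous in $d_{\CI}$ (via the chain $|f(q)-g(q)| \leq d_h(f(q),g(q)) \leq d_{\CI}(f,g)$ coming from inequality~(\ref{EQ:property_of_d_h})), so the countable intersection over rational pairs is $G_\delta$. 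Condition (iv) defines an open set in $[0,1]$, and for (v) Corollary~\ref{Cor:xi_is_continuous_function} makes $\Lambda_0$ and $\Lambda_1$ continuous functions of $(f_0, f_1, p_f)$, so $\{\Lambda_j > 0\}$ is open.

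The one technical point is condition (ii) (and, by symmetry, (iii)). Quantifying over rationals would only give $f_0(q) < q$, which passes under closure to $f_0(x) \leq x$ rather than the required strict inequality at every interior point. Instead I would exhaust $(0,1)$ by the compact sets $K_n = [1/n, 1-1/n]$ and write
\[
\bigl\{ f : f_0(x) < x \text{ for all } x \in (0,1)\bigr\} = \bigcap_{n \geq 2} \bigl\{ f : \max_{x \in K_n}(f_0(x)-x) < 0 \bigr\}.
\]
Each set on the right-hand side is open because on the compact $K_n \subset (0,1)$ the metric $d_h$ is bi-Lipschitz equivalent to the Euclidean metric, so $d_{\CI}$-convergence forces uniform Euclidean convergence on $K_n$, whence $f_0 \mapsto \max_{x\in K_n}(f_0(x) - x)$ is continuous on $\CI$. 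This is the only genuinely delicate step: replacing the naive rational quantification with a compact exhaustion to recover the strict inequality.

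Assembling the pieces, $\RDSI$ is a countable intersection of open and $G_\delta$ subsets of $X$, hence itself $G_\delta$. The cited consequence of the Baire Category Theorem then yields that $(\RDSI, d_m)$ is a Baire space.
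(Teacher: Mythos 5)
Your proof is correct and takes essentially the same route as the paper: it realizes $\RDSI$ as a $G_\delta$ subset of the complete space $\CI\times\CI\times[0,1]$, handling the homeomorphism condition via rational pairs and the strict diagonal inequalities via the compact exhaustion $K_n=[1/n,1-1/n]$, then invokes the Baire property of $G_\delta$ subsets of complete spaces. The only cosmetic difference is your appeal to bi-Lipschitz equivalence of $d_h$ with the Euclidean metric on $K_n$ --- which does not quite apply as stated, since the values $f_0(x)$ for $x\in K_n$ need not lie in $K_n$ --- whereas the one-sided global inequality~\eqref{EQ:property_of_d_h} that you already cite is all that is needed and is exactly what the paper uses.
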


\begin{proof}
Let $\XI = \CI \times \CI \times [0, 1]$. This is a Cartesian product of finitely many complete metric spaces and hence a complete metric space in the maximal metric defined by Equation~(\ref{eq:d_m}) for $f, g \in \XI$.

We will show that each condition in Definition~\ref{Def:RDS} forms a $G_{\delta}$ subset of $\XI$ and then $\RDSI$, being the intersection of a finite number of $G_{\delta}$ subsets, is therefore a $G_{\delta}$ subset of $\XI$.
\begin{enumerate}[(i)]
  \item Let $Y \subset \CI$ be the subset of all homeomorphisms. The condition $f \in Y$ is equivalent to the condition
  \begin{align*}
  f(x_1) < f(x_2)&,\quad \forall x_1 \in [0, 1] \cap \mathbb{Q}, \forall x_2 \in (x_1, 1] \cap \mathbb{Q},
\end{align*}
and we can now express the subset $Y$ as a countable intersection of open sets
$$Y = \bigcap_{x_1 \in [0, 1] \cap \mathbb{Q}} \ \bigcap_{x_2 \in (x_1, 1] \cap \mathbb{Q}}  \{ f \in \CI : f(x_1) < f(x_2)\},$$
hence $Y \times \CI \times [0, 1]$ is a $G_{\delta}$ subset of $\XI$. A similar argument for $f_1$ shows that $\CI \times Y \times [0, 1]$ is also a $G_{\delta}$ subset of $\XI$.
  \item For $n \in \mathbb{N}, n > 1$, let $I_n = \left[\frac{1}{n}, 1-\frac{1}{n}\right]$ and $U_n = \left\{ f \in \CI : f(x) < x,\ \forall x \in I_n \right\}$. The set $U_n$ is open since if $f \in U_n$ then by continuity of $f$ and compactness of $I_n$ there exists $\delta > 0$ such that $f(x) + \delta < x,\ \forall x \in I_n$. If $g \in B_{\delta}(f)$ then by equation~\eqref{EQ:property_of_d_h}, $\forall x \in I_n$ we have $|f(x) - g(x)| \leq d_{h}(f(x), g(x)) < \delta$ which implies  $g(x) < f(x) + \delta < x$ on $I_n$, hence $U_n$ is open. The set $U = \bigcap^{\infty}_{n = 2} U_n$ is the set of all functions satisfying the condition of being under the diagonal on $(0, 1)$, hence $U$ is a $G_{\delta}$ set and subsequently $U \times \CI \times [0, 1]$ is $G_{\delta}$. 
  \item The proof is analogous to (ii).
  \item The set $\CI \times \CI \times (0, 1)$ is a $G_{\delta}$ set since it is open.
  \item It follows from Corollary~\ref{Cor:xi_is_continuous_function} that the Lyapunov exponents as functions $\Lambda_0, \Lambda_1: \XI \to \mathbb{R}$ are continuous. Therefore the set $\Lambda^{-1}_0( (0, \infty) )$, resp. $\Lambda^{-1}_1( (0, \infty) )$, is open, hence $G_{\delta}$. \qedhere
\end{enumerate}
\end{proof}

We conclude our discussion about Baire spaces of random dynamical systems by going back to the real line. A random dynamical system $f=(f_0,f_1,p_f)\in\RDSI$ can be transferred to the real line by taking the image of both $f_0,f_1$ under $\phi$ and leaving the probability unchanged. We overload the symbol $\phi$ and write $F=(F_0,F_1,p_F) := \phi(f)=(\phi(f_0),\phi(f_1),p_f)$. Let $\RDSR$ be the space of all systems $F$ obtained in this way. The conditions in Definition~\ref{Def:RDS} translate in the following way:
\begin{enumerate}[(i)]
  \item $F_0$ and $F_1$ are homeomorphisms,
  \item $F_0(x) < x, \forall x \in \mathbb{R}$ (below diagonal condition),
  \item $F_1(x) > x, \forall x \in \mathbb{R}$ (above diagonal condition),
  \item $p_F \in (0, 1)$,
  \item Lyapunov exponents at $\pm\infty$ are positive, i.e.
\begin{equation}\label{eq:Lyapunov_exponent_at_inft}
\begin{aligned}
 \Lambda_{-\infty} &= p_F \lim_{x \to -\infty} (F_0(x) - x)) + (1-p_F) \lim_{x \to -\infty} (F_1(x) - x) > 0, \\
 \Lambda_{+\infty}  &= p_F (-\lim_{x \to \infty} (F_0(x) - x))) + (1-p_F)(- \lim_{x \to \infty} (F_1(x) - x)) > 0.
\end{aligned}
\end{equation}
\end{enumerate}
Note that at plus infinity the limits take the opposite sign, see Lemma~\ref{Lemma:Limits_equal_to_log_of_derivatives}.


\begin{remark}\label{Rem:admissible_system_is_in_RDSI}
The pairs of homeomorphisms allowed as \emph{admissible iterated function systems} in \cite[Definitions 1 \& 2]{CzernousSzarek} and also \cite[Definitions 1 \& 2]{CzudekSzarek} are also allowed in $\RDSI$, but not conversely, since we do not require smoothness on a neighborhood of the endpoints. Nevertheless, some of the results and ideas from these articles are applicable in our setting and we use them in the sequel.
\end{remark}
\section{Markov operator, stationary measure and other notions}

In this section, we recall notions needed in the remainder of the text. Let $U$ be an interval in $\mathbb{R}$ and denote by $\Mp(U)$, resp. by $\Mfin(U)$, the set of all probability measures, resp. the set of all finite measures, on the $\sigma$-algebra of Borel sets $\B(U)$. 

\begin{definition} 
An operator $P : \Mfin(U) \to \Mfin(U)$ is called a \emph{Markov operator} if it is positive linear and preserves the measure, that is
\begin{enumerate}[(i)]
  \item $P(\lambda_1 \mu_1 + \lambda_2 \mu_2) = \lambda_1 P\mu_1 + \lambda_2 P\mu_2,\ \lambda_1, \lambda_2 > 0,\ \mu_1, \mu_2 \in \Mfin(U)$,
  \item $P\mu(U) = \mu(U),\ \mu \in \Mfin(U)$.
\end{enumerate}
\end{definition}

A general random dynamical system $(\{f_i\}, \{p_i\}),\ i=1, ..., k$, where $\{f_i\}$ is a $k$-tuple of maps from $U$ to itself and $\{p_i\}$ is a probability vector, generates a Markov operator $P: \Mfin(U) \to \Mfin(U)$ of the form 
$$P\mu = \sum_{i=1}^k p_i f_i \mu,$$
where $f_i \mu (A) = \mu(f^{-1}_i(A))$ for $A \in \B(U)$, which describes the evolution of measure due to the action of the randomly chosen map. Therefore $f \in \RDSI$ generates a Markov operator $P_f: \Mfin(0, 1) \to \Mfin(0, 1)$ of the form
$$P_f\mu(A) = p_f \mu(f^{-1}_0(A)) + (1-p_f) \mu(f^{-1}_1(A)),\quad A \in \B(0, 1).$$
Similarly $F \in \RDSR$ generates the corresponding Markov operator
$$P_F\mu(A) = p_F \mu(F^{-1}_0(A)) + (1-p_F) \mu(F^{-1}_1(A)),\quad A \in \B(\mathbb{R}).$$

\begin{definition}
A measure $\mu \in \Mfin(0,1)$ is called \emph{stationary} for $f\in\RDSI$ if $P_f\mu = \mu$. Similarly a measure $\mu \in \Mfin(\mathbb{R})$ is called \emph{stationary} for $F\in\RDSR$ if $P_F\mu=\mu$.
\end{definition}

\begin{theorem}[{\cite[Theorem 1]{CzudekSzarek}}]\label{Th:Existence_of_unique_stationary_measure}
A random dynamical system $f \in \RDSI$ has a unique stationary measure $\mu_f \in \Mp(0, 1)$. Moreover $\mu_f$ is atomless.
\end{theorem}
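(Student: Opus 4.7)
The plan is to push the problem to $\mathbb{R}$ via the conjugacy $\phi$ of Lemma~\ref{Lemma:Limits_equal_to_log_of_derivatives}, where the positive Lyapunov exponents $\Lambda_{\pm\infty}$ in \eqref{eq:Lyapunov_exponent_at_inft} translate into genuine positive drift of the randomized dynamics away from both ends of the real line. This converts the problem into a recurrent Markov chain on $\mathbb{R}$ amenable to Foster-Lyapunov techniques. For existence, I would take $V(x) = |x|$ as a Lyapunov function; using \eqref{eq:Lyapunov_exponent_at_inft}, I would show that for $|x|$ sufficiently large,
\[
(P_F V)(x) \;\leq\; V(x) - \tfrac{1}{2}\min\{\Lambda_{-\infty}, \Lambda_{+\infty}\}.
\]
A standard consequence is that the Ces\`aro averages $\frac{1}{n}\sum_{k=0}^{n-1} P_F^k \delta_{x_0}$ form a tight family in $\Mp(\mathbb{R})$, and any weak-$\ast$ limit is $P_F$-invariant. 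Transporting back by $\phi^{-1}$ yields $\mu_f \in \Mp(0,1)$, and the drift estimate at the endpoints guarantees that $\mu_f$ charges no neighborhood of $\{0, 1\}$.

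For uniqueness together with atomlessness my approach would be to establish asymptotic stability of $P_f$ in the Fortet-Mourier metric, following the Lasota-Szarek paradigm. Three ingredients suffice: (a) the tightness obtained above; (b) the \emph{e-property} of $P_f$, i.e.\ equicontinuity of $\{P_f^n \varphi\}_{n \ge 0}$ for every bounded Lipschitz $\varphi$; and (c) a concentration estimate showing that from every compact $K \subset (0, 1)$, with positive probability any random orbit enters a prescribed small interval in bounded time. Ingredient (c) is immediate from the below-diagonal condition: the deterministic word $(f_0, f_0, \ldots, f_0)$ occurs with probability $p_f^n > 0$ and drives all of $K$ arbitrarily close to $0$. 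Once (a)-(c) are in place, one concludes $P_f^n \mu \to \mu_f$ weakly for every initial $\mu \in \Mp(0, 1)$, forcing uniqueness. Atomlessness then follows because $P_f^n \delta_{x_0}$ is purely atomic with individual masses $p_f^{k}(1-p_f)^{n-k}$ spread over the $2^n$ random images of $x_0$; the positive Lyapunov condition prevents any single point from collecting non-vanishing mass in the limit, so $\mu_f$ has no atom.

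The principal obstacle is ingredient (b). Standard e-property proofs rely on derivative bounds throughout the interval, whereas in our Baire space only pointwise differentiability at the two endpoints is assumed. The workaround is to exploit monotonicity: since $f_0, f_1$ are order-preserving homeomorphisms, every composition along a random word is again an order-preserving homeomorphism, and the gap between two orbits is monotonically controlled by the initial gap. Combined with uniform contraction toward the attractor near $0$ and $1$, made quantitative by $\Lambda_{\pm\infty} > 0$, and the almost-sure recurrence furnished by the Lyapunov bound, this yields equicontinuity without any interior smoothness assumption. This is essentially the strategy carried out in \cite{CzudekSzarek}, whose Theorem~1 is the result being cited, and I would adapt that argument rather than reinvent it.
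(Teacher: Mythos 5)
Your proposal ends where the paper's proof begins and ends: with \cite[Theorem~1]{CzudekSzarek}. The paper's entire argument is the observation (Remark~\ref{Rem:admissible_system_is_in_RDSI}) that $f\in\RDSI$ satisfies every hypothesis of an admissible iterated function system from \cite{CzudekSzarek} except continuous differentiability on a neighborhood of the endpoints, together with the check that this one hypothesis is not actually used in the proof of their Theorem~1. So your closing decision to adapt the cited argument rather than reinvent it is exactly the intended proof, and you correctly isolated the hypothesis that needs attention; the Foster--Lyapunov/e-property sketch is consistent with the machinery inside \cite{CzudekSzarek} but is more than the paper supplies or needs.

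Two steps of your standalone sketch would not survive if you did try to carry them out. First, atomlessness: weak$^*$ limits of purely atomic measures can have atoms (e.g. $\tfrac12\delta_{x_n}+\tfrac12\delta_{y_n}\to\delta_z$ when $x_n,y_n\to z$), and the $2^n$ random images of $x_0$ need not be distinct, so ``mass spread over $2^n$ points'' proves nothing about the limit. The correct argument works directly with stationarity: if $\mu_f$ had atoms, pick $x^*$ of maximal atomic mass $m>0$ (the maximum is attained since only finitely many atoms can have mass above any positive threshold); then $m=p_f\,\mu_f(\{f_0^{-1}(x^*)\})+(1-p_f)\,\mu_f(\{f_1^{-1}(x^*)\})$ with both summand masses at most $m$ forces both preimages to be atoms of mass $m$, and iterating $f_1^{-1}$ produces infinitely many distinct atoms of mass $m$ in $(0,1)$, contradicting $\mu_f\in\Mp(0,1)$. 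Second, your ingredient (c): the word $(f_0,\dots,f_0)$ concentrates all of $K$ near $0$, which is \emph{outside} the state space $(0,1)$; the lower-bound/concentration criterion for asymptotic stability requires concentration near a point of the space (equivalently, inside a compact subset of $(0,1)$), and it is precisely the positive Lyapunov exponents that are needed to push the mass back into a fixed compact set. That step is the delicate part of \cite{CzudekSzarek}, not an immediate consequence of the below-diagonal condition.
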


\begin{proof}
We noted in Remark~\ref{Rem:admissible_system_is_in_RDSI} that $f$ satisfies the requirements of an admissible iterated function system from \cite{CzudekSzarek} save continuous differentiability on some neighborhood of the endpoints. This property is not used in the proof of \cite[Theorem~1]{CzudekSzarek} and therefore the same proof is valid also for~$f \in \RDSI$.
\end{proof}

The homeomorphism $h$ defines a pushforward operator $h_{*}: \Mfin(0, 1) \to \Mfin(\mathbb{R})$ which carries $\mu \in \Mfin(0, 1)$ to
$$h_{*}\mu(A) = \mu(h^{-1}(A)),\ A \in \B(\mathbb{R}),$$
and preserves the property of being stationary. Therefore whenever we work with $f \in \RDSI$ and its stationary measure $\mu_f$, we can ``jump'' to $\RDSR$ by setting $F = \phi(f)$ and then $\mu_F = h_{*}\mu_f$ is its unique stationary measure on the real line.

The \emph{support of $\mu \in \Mp(U)$}, denoted $\supp(\mu)$, is defined as the set of all points $x \in U$ for which every open neighborhood $N_x$ of $x$ has positive measure. Note that $\supp(\mu)$ is automatically a closed set in $U$.
A measure $\mu \in \Mp(U)$ has \emph{full support} if $\mu(A) > 0$ for every open subset $A$ of $U$. \emph{An atom} is a singleton set with positive measure and a measure which has no atoms is called \emph{atomless}. A measure $\mu \in \Mp(U)$ is called \emph{singular} if there exists a set $A \in \B(U)$ such that $\mu(A)=1$ while the Lebesgue measure of $A$ is zero. Let $\mu_n$ be a sequence of probability measures from $\Mp(U)$. We define \emph{weak$^*$ convergence} of probability measures $\mu_n$ to $\mu \in \Mp(U)$, denoted by $\mu_n \xrightarrow{\text{w}^*} \mu$,  if and only if $\langle \mu_n, \xi\rangle \to \langle \mu, \xi\rangle$ for every continuous bounded function $\xi$ on $U$, where $\langle \mu, \xi\rangle = \int_U \xi\ d\mu$. A collection $M$ of probability measures from $\Mp(U)$ is called \emph{tight} if, for any $\epsilon > 0$, there is a compact subset $K_{\epsilon}$ of $U$ such that for all measures $\mu \in M$
$$\mu(K_{\epsilon}) > 1 - \epsilon.$$

Finally, we give a few definitions from the topological theory of random dynamical systems. For a system $f \in \RDSI$, and a finite word $\omega=i_0 i_1 \cdots i_{n-1} \in \{0,1\}^n$, $n \in \mathbb{N}$, let $f_{\omega}$ be the composition $f_{i_{n-1}} \circ \cdots \circ f_{i_1} \circ f_{i_0}$. The \emph{orbit of a point} $x \in [0, 1]$ under $f$ is $\Orb(x) = \{ f_{\omega}(x)~:~\omega \in \{0,1\}^n, n \in \mathbb{N} \}$. The set $A \subset I$ is \emph{invariant} under $f$ if $\Orb(x) \subset A$ for every $x \in A$. We use similar notation for $F\in\RDSR$ as well. A random dynamical system $F \in \RDSR$ is \emph{minimal} if every orbit under $F$ is dense in $\mathbb{R}$. Note that systems from $\RDSI$ are never minimal because there are fixed points at the endpoints.

\section{Dense subsets of $\RDSI$}\label{sec:dense_subsets}

\subsection{Systems where the support of the stationary measure is a null set}
\strut \\ \indent
Let $\Leb(U)$ denote the Lebesgue measure of any Borel set $U \subset \mathbb{R}$.

\begin{theorem}\label{Th:Dense_set_with_Lebesgue_support_zero}
The systems $f \in \RDSI$ for which $\Leb(\supp(\mu_f)) = 0$ form a dense subset of $\RDSI$.
\end{theorem}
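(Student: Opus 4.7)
Given $f \in \RDSI$ and $\epsilon > 0$, my plan is to exhibit $g \in \RDSI$ with $d_m(f,g) < \epsilon$ together with a compact set $K \subset (0,1)$ of Lebesgue measure zero that is forward invariant under both maps in the sense $g_0(K) \subseteq K$ and $g_1(K) \subseteq K$. The existence of such a pair $(g,K)$ will come from Construction~\ref{Rem:Construction_of_g} promised in the next section, where $K$ is assembled as a countable union of small Cantor pieces and the dynamics of $g_0,g_1$ cycles one piece into another.

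Granting $(g,K)$, the theorem will follow from a short uniqueness argument. Consider $\mathcal{M}_K := \{\nu \in \Mp((0,1)) : \nu(K) = 1\}$, which is nonempty, convex, and weak-$*$ compact since $K$ is compact. Forward invariance gives $g_i^{-1}(K) \supseteq K$ for $i=0,1$, so for every $\nu \in \mathcal{M}_K$,
\[
P_g \nu(K) = p_f\,\nu(g_0^{-1}(K)) + (1-p_f)\,\nu(g_1^{-1}(K)) \geq \nu(K) = 1,
\]
and therefore $P_g(\mathcal{M}_K) \subseteq \mathcal{M}_K$. Since $P_g$ is weak-$*$ continuous and affine, the Schauder--Tychonoff fixed point theorem yields a stationary measure inside $\mathcal{M}_K$, and the uniqueness statement of Theorem~\ref{Th:Existence_of_unique_stationary_measure} forces this measure to coincide with $\mu_g$. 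Thus $\mu_g(K) = 1$, so $\supp(\mu_g) \subseteq K$ and $\Leb(\supp(\mu_g)) \leq \Leb(K) = 0$.

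I expect the real difficulty to lie entirely in the construction of the perturbation $g$, which must reconcile several competing demands. The modifications of $f_0,f_1$ must be very small in the unbounded metric $d_h$, which is particularly sensitive near the endpoints $0$ and $1$; in practice I expect $g_i$ to agree with $f_i$ on neighborhoods of $\{0,1\}$, thereby preserving the endpoint derivatives and hence the positive Lyapunov exponents and membership of $g$ in $\RDSI$. At the same time, $g_0$ (which pushes points leftward) and $g_1$ (which pushes points rightward) must share the same forward-invariant null set $K$; a single Cantor block cannot be preserved by both, so $K$ will have to be an infinite family of Cantor pieces linked by the dynamics, arranged so that $g_0$-images and $g_1$-images stay inside $K$ while the maps remain homeomorphisms of $[0,1]$ with $g_0<\id$ and $g_1>\id$ on $(0,1)$. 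Engineering all of these constraints simultaneously is the substantive content of Construction~\ref{Rem:Construction_of_g}.
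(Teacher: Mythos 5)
There is a genuine gap in the deduction step, and it stems from the requirement that $K$ be \emph{compact} in $(0,1)$. No nonempty compact set $K \subset (0,1)$ can satisfy $g_0(K) \subseteq K$: since $g \in \RDSI$ forces $g_0(x) < x$ on all of $(0,1)$, the point $m = \min K$ (which exists by compactness and lies in $(0,1)$) satisfies $g_0(m) < m$, so $g_0(m) \notin K$. The invariant null set that actually comes out of Construction~\ref{Rem:Construction_of_g} is a countable union of Cantor pieces accumulating at \emph{both} endpoints; it is closed in $(0,1)$ but not compact there (its closure in $[0,1]$ must contain $0$ and $1$). This is not a cosmetic issue, because compactness of $K$ is exactly what your argument uses to make $\mathcal{M}_K$ weak-$*$ compact. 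With $K$ merely closed in $(0,1)$, the set $\mathcal{M}_K$ is closed but not tight --- e.g.\ $\delta_{x_n}$ with $x_n \in K$, $x_n \to 0$ has no weak-$*$ limit in $\Mp(0,1)$ --- so Schauder--Tychonoff does not apply as stated. Enlarging $K$ to include the endpoints does not help either: then the fixed point produced could be $\delta_0$ or $\delta_1$, and the uniqueness in Theorem~\ref{Th:Existence_of_unique_stationary_measure} (which concerns measures on the \emph{open} interval) no longer identifies it with $\mu_g$.

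The gap is repairable in at least two ways. One is to restore compactness by intersecting $\mathcal{M}_K$ with a tight $P_g$-invariant family such as $\NMalpha$ from Section~\ref{sec:continuity} (one must then check this intersection is nonempty, e.g.\ by choosing $M$ large enough that some $\delta_a$, $a \in K$, lies in $\NMalpha$). The other is the route the paper takes: invoke the asymptotic stability $P_g^n\mu \xrightarrow{\text{w}^*} \mu_g$ from \cite[Theorem~2]{CzudekSzarek}, start from $\mu = \delta_a$ with $a$ in the invariant set, observe $P_g^n\mu(K) = 1$ for all $n$, and apply the Portmanteau inequality $\limsup_n P_g^n\mu(K) \leq \mu_g(K)$ for the closed set $K$. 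This avoids any compactness of $K$ or of $\mathcal{M}_K$. Your reduction of the theorem to the existence of the pair $(g,K)$, and your remarks on why the construction is the substantive difficulty, are otherwise in the right spirit; but as written the fixed-point argument rests on a set that cannot exist.
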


\begin{corollary}\label{Cor:Dense_set_with_singular_measure}
For a dense set of systems $f\in\RDSI$, the stationary measure $\mu_f$ is singular with respect to the Lebesgue measure.
\end{corollary}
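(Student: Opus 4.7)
The plan is to extract Corollary~\ref{Cor:Dense_set_with_singular_measure} as an immediate consequence of Theorem~\ref{Th:Dense_set_with_Lebesgue_support_zero}: all of the substantive work lies in the construction of systems whose stationary measure is supported on a Lebesgue null set (ultimately Construction~\ref{Rem:Construction_of_g}), and the passage from that property to singularity in the sense defined earlier is a one-line observation. Concretely, for each $f \in \RDSI$ drawn from the dense set produced by Theorem~\ref{Th:Dense_set_with_Lebesgue_support_zero}, I would exhibit $A := \supp(\mu_f)$ as the witness to singularity. The set $A$ is closed in $(0,1)$ and hence Borel, and $\Leb(A) = 0$ by hypothesis.

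The only remaining ingredient is the standard fact that a Borel probability measure on a second-countable space places full mass on its support. Since $(0,1)$ is second-countable (the rational open subintervals form a countable base), every point $x \in (0,1) \setminus \supp(\mu_f)$ has, by definition of the support, some open neighborhood of $\mu_f$-measure zero, which can then be shrunk to a rational open subinterval $V_x \ni x$ still of $\mu_f$-measure zero. The countable collection $\{V_x\}$ covers $(0,1) \setminus \supp(\mu_f)$ by sets of $\mu_f$-measure zero, so $\mu_f(A) = 1$. Combined with $\Leb(A) = 0$, this matches verbatim the paper's definition of singularity, and no further argument is required.

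I do not expect any obstacle specific to this corollary; the main difficulty has been packaged into Theorem~\ref{Th:Dense_set_with_Lebesgue_support_zero}, whose role is precisely to perturb an arbitrary $f \in \RDSI$ within the $d_m$-topology so that the stationary measure concentrates on a Cantor-type, and therefore Lebesgue null, set. In the final paper the corollary should occupy only a sentence or two.
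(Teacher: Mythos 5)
Your proposal is correct and is exactly the (implicit) argument the paper intends: the corollary is stated as an immediate consequence of Theorem~\ref{Th:Dense_set_with_Lebesgue_support_zero}, with $A=\supp(\mu_f)$ serving as the Borel witness required by the paper's definition of singularity. Your added justification that $\mu_f(\supp(\mu_f))=1$ via second countability of $(0,1)$ is a standard detail the paper leaves unstated, and nothing further is needed.
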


First, we outline the proof of Theorem~\ref{Th:Dense_set_with_Lebesgue_support_zero}. For arbitrary $f \in \RDSI$ and given $\epsilon > 0$, we construct $g = \RDSI$ which is $\epsilon$-close to $f$. The system $g$ is constructed in such a way that the support of its stationary measure is a countable union of adjacent Cantor sets, each of them with Lebesgue measure zero, union with the set $\{ 0, 1 \}$. In doing so, we will take advantage of the fact that the space $\RDSR$ is isometric to $\RDSI$, transfer $f \in \RDSI$ to $F \in \RDSR$, construct our $\epsilon$-close system $G \in \RDSR$ and transfer $G$ back to $g \in \RDSI$. In the process of this construction, we also make sure that conditions $(i)$ to $(v)$ from Definition \ref{Def:RDS} are satisfied for $g$ to belong to $\RDSI$. 

The following lemma will be used later in the proof and is a standard fact from analysis.
\begin{lemma}\label{Lemma:OrderHomeomorphismExtension}
Let $C_1, C_2 \subset \mathbb{R}$ be two Cantor sets and denote $m_{i} = \min\{C_i\}$, $M_{i} = \max\{C_i\}$ for $i \in \{ 1, 2\}$. Then there exists an order-preserving homeomorphism ${f: [m_{1}, M_{1}] \to [m_{2}, M_{2}]}$ such that $f(C_1) = C_2$.
\end{lemma}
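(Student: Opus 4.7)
The plan is to reduce the problem to a combinatorial back-and-forth argument on the countable sets of complementary gaps. For $i \in \{1,2\}$, let $\mathcal{G}_i$ denote the set of connected components of $[m_i, M_i] \setminus C_i$. Each $\mathcal{G}_i$ is countable (it is a disjoint family of nondegenerate open intervals), and because $C_i$ is a Cantor set (compact, perfect, totally disconnected, nowhere dense), the natural order on $\mathcal{G}_i$ inherited from $\mathbb{R}$ is a dense linear order with no first or last element. By Cantor's isomorphism theorem for countable dense linear orders without endpoints, there is an order-preserving bijection $\sigma: \mathcal{G}_1 \to \mathcal{G}_2$.

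Given $\sigma$, I would build $f: [m_1, M_1] \to [m_2, M_2]$ in pieces. Set $f(m_1) = m_2$ and $f(M_1) = M_2$. On the closure of each gap $(a,b) \in \mathcal{G}_1$, let $f$ be the unique orientation-preserving affine bijection onto the closure of $\sigma((a,b))$; this determines $f$ on every gap interior and on every gap endpoint. For the remaining points $x \in C_1 \cap (m_1, M_1)$ that are not endpoints of any gap, define
\[
f(x) = \sup \bigl\{ f(b) : (a,b) \in \mathcal{G}_1,\ b \le x \bigr\}.
\]
Because $\sigma$ preserves order and $x$ is approached by gap endpoints from both sides, this supremum coincides with the corresponding infimum taken from the right, so the definition is unambiguous.

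The verification would then proceed through four items. Strict monotonicity is immediate from the order-preservation of $\sigma$ together with the affine pieces. Continuity is automatic inside gaps; at a point of $C_1$ it follows from the density of gap endpoints on both sides, which is in turn a consequence of total disconnectedness combined with perfectness. Bijectivity is easiest to obtain by carrying out the same construction with $\sigma^{-1}$ to produce a two-sided inverse. Finally, the identity $f(C_1) = C_2$ holds because $f$ sends gap endpoints of $C_1$ bijectively to gap endpoints of $C_2$ by construction, while continuity combined with closedness of $C_2$ forces the non-endpoint points of $C_1$ to be mapped into $C_2$, with surjectivity onto $C_2$ following symmetrically.

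The step I expect to be the main obstacle is the well-definedness and continuity of the extension to points of $C_1$ that are not gap endpoints. Specifically, one must check that two sequences of gap endpoints in $C_1$ converging to a common point $x$ from opposite sides have their $\sigma$-images converge to the \emph{same} point of $C_2$. This is where the purely combinatorial bijection $\sigma$ must be promoted to a genuine metric correspondence between the Cantor sets; the argument hinges on the fact that $\sigma$ carries a cofinal family of gaps below $x$ to a cofinal family of gaps below $f(x)$ and similarly from above, which pins down $f(x) \in C_2$ uniquely.
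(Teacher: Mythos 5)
The paper does not actually prove this lemma; it is stated as ``a standard fact from analysis'' and used as a black box, so there is no in-paper argument to compare against. Your construction is the standard one and it is sound: the gaps of a Cantor set in $[m_i,M_i]$ form a countable dense linear order without endpoints (density of the order and the absence of extremal gaps both follow from perfectness plus nowhere-density), Cantor's isomorphism theorem gives the order bijection $\sigma$, affine maps handle the gap closures, and the monotone sup-extension handles the rest of $C_1$. The one step you flag as the main obstacle is indeed where the content lies, and your stated reason (``cofinality of images of gaps below $f(x)$'') is slightly circular as written, since $f(x)$ is what is being defined; the clean way to close it is to observe that if $s=\sup\{f(b):(a,b)\in\mathcal{G}_1,\ b\le x\}$ and $t=\inf\{f(a):(a,b)\in\mathcal{G}_1,\ a\ge x\}$ satisfied $s<t$, then $(s,t)$ would meet no gap of $C_2$ (because $\sigma$ maps the initial segment of gaps below $x$ onto an initial segment of $\mathcal{G}_2$ and the final segment onto the complementary final segment), so $(s,t)\subset C_2$, contradicting that $C_2$ has empty interior; closedness of $C_2$ then puts the common value $s=t$ in $C_2$. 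The same observation gives continuity at gap endpoints from the side facing $C_1$. With that supplied, your verification of monotonicity, bijectivity via the $\sigma^{-1}$-construction, and $f(C_1)=C_2$ (gap interiors map bijectively to gap interiors, so the complements correspond) goes through. In short: correct approach, essentially complete, with one justification that should be made non-circular by invoking nowhere-density of $C_2$ explicitly.
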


\begin{construction}\label{Rem:Construction_of_g}
Let $f = (f_0, f_1, p) \in \RDSI$ and $\epsilon > 0$ be given.

\underline{Step 1:} Let $F = \phi(f) \in \RDSR$. Recall that the values of the limits $F_0(x)-x$, $F_1(x)-x$ at both infinities are given by Lemma~\ref{Lemma:Limits_equal_to_log_of_derivatives}. It follows that we can approximate $F_0, F_1$ near $\pm\infty$ by translations. Choose $R>0$ large enough so that 
\begin{align*}
|(F_0(x) - x) - \log(f'_0(0))| &< \frac{\epsilon}{2},\quad |(F_1(x) - x) - \log(f'_1(0))| < \frac{\epsilon}{2},
\end{align*}
whenever $\min(x,F_0(x),F_1(x))<-R$ and
\begin{align*}
|(F_0(x) - x) + \log(f'_0(1))| &< \frac{\epsilon}{2},\quad |(F_1(x) - x) + \log(f'_1(1))| < \frac{\epsilon}{2},
\end{align*}
whenever $\max(x,F_0(x),F_1(x))>R$.

Approximate $F_0$ on the interval $(-\infty, R]$ by an affine function of the form $A(x) = x + a$ where $a \in (\log(f'_0(0)) - \epsilon/2, \log(f'_0(0)) + \epsilon/2)$ and similarly on the interval $[R, \infty)$ approximate $F_0$ by an affine function of the form $B(x) = x + b$ where $b \in (-\log(f'_0(1)) - \epsilon/2, -\log(f'_0(1)) + \epsilon/2)$ and similarly for $F_1$, see Figure~\ref{Fig:R_box}. 

Additionally assume that $R$ is large enough that $F_0(R)>-R$ and $F_1(-R)<R$, which guarantees that the graphs of the functions ``pass through'' the $R$-box. 

It remains to choose appropriate linear functions on the complement of the interval $[-R, R]$, approximate $F_0$ and $F_1$ on the interval $[-R, R]$ and ``connect the bits."
 
\begin{figure}[h!]
  \centering
  \begin{subfigure}{0.5\textwidth}
  \centering
  \includegraphics[width=6cm]{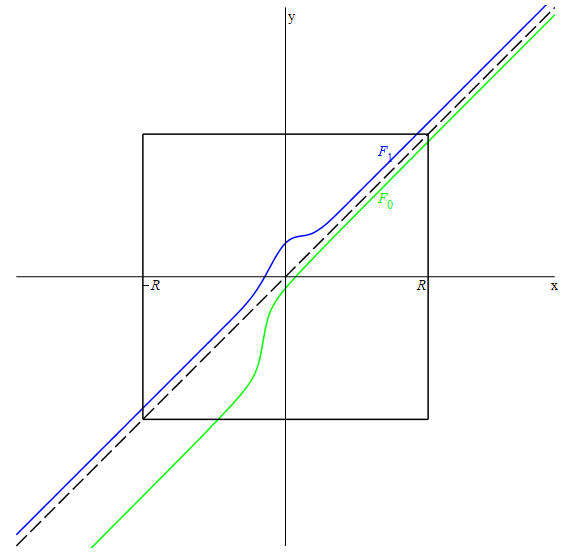}
  \caption{R-box}
  \label{Fig:R_box}
  \end{subfigure}%
  \begin{subfigure}{0.5\textwidth}
  \centering
  \includegraphics[width=6cm]{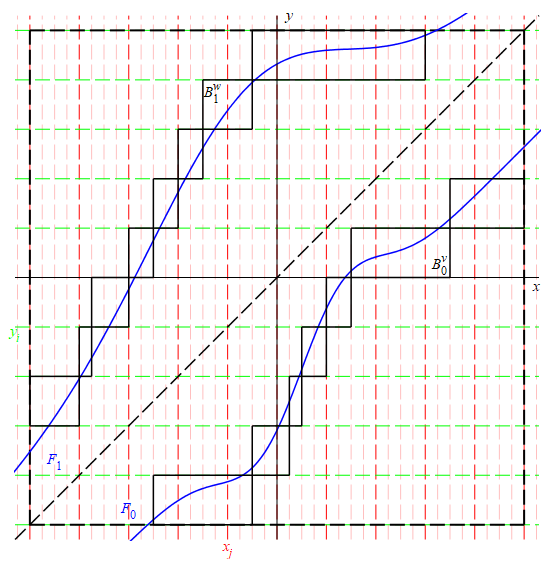}
  \caption{Linked boxes}
  \label{Fig:M_levels}
  \end{subfigure}
  \caption{Construction of $\epsilon$-close system}
\end{figure}

\underline{Step 2:} Without loss of generality, we can assume $R = \frac{1}{2}$ for we can scale all our calculations using the linear transformation $x \mapsto \frac{1}{2R}x$ with the inverse $x \mapsto 2Rx$. Henceforward, we proceed on the interval $\left[-\frac{1}{2}, \frac{1}{2}\right]$.

\underline{Step 3:} Choose $M \in \mathbb{N}$ such that all three conditions below are satisfied. The purpose of each condition will be explained at an appropriate stage of the construction.
\begin{align}
\label{ConditionM-1} &\frac{1}{2^M} < \frac{\epsilon}{2},\\
\label{ConditionM-2} &\frac{1}{2^M} < \min_{x \in \left[-\frac{1}{2}, \frac{1}{2} \right]} \{ |F_0(x) - x|, |F_1(x) - x| \},\\
\label{ConditionM-3} &\frac{1}{2^M} < \min\left\{\left|-\tfrac{1}{2} - F_0\left(\tfrac{1}{2}\right)\right|, \left|\tfrac{1}{2} - F_1\left(-\tfrac{1}{2}\right)\right|\right\}.
\end{align}
Define a partition of the interval $\left[-\frac{1}{2}, \frac{1}{2}\right]$ on the $y$-axis by points
$$y_i = \frac{-2^{M-1}+i}{2^M},\quad i = 0, 1, ..., 2^M,$$
evenly spaced by $\frac{1}{2^M}$. The intervals $[y_{i-1}, y_i]$, $i = 1, ..., 2^M$ form steps by which we approximate the functions $F_0$ and $F_1$ at a later stage.

\underline{Step 4:} Both $F_0$ and $F_1$ are uniformly continuous on the interval $\left[-\frac{1}{2}, \frac{1}{2}\right]$. Therefore for the given number $\frac{1}{2^M}$, there exists $\delta_0$, resp. $\delta_1$, such that for arbitrary $x, y \in \left[-\frac{1}{2}, \frac{1}{2}\right]$, $|x-y| < \delta_0$ we have $|F_0(x) - F_0(y)| < \frac{1}{2^M}$, resp. $|x-y| < \delta_1$ we have $|F_1(x) - F_1(y)| < \frac{1}{2^M}$. Choose $M' \in \mathbb{N}$ such that
\begin{equation*}
M' > M \text{ and }
\frac{1}{2^{M'}} < \min\{ \delta_0, \delta_1 \},
\end{equation*}
and define a finer partition of the interval $\left[-\frac{1}{2}, \frac{1}{2}\right]$ on the $x$-axis by points 
$$x_j = \frac{-2^{M' - 1} + j}{2^{M'}},\quad j = 0, 1, ..., 2^{M'},$$
evenly spaced by $\frac{1}{2^{M'}}$. In essence, every time the function $F_0$, resp. $F_1$, increases by $\frac{1}{2^M}$ there exists a point $x_j$, resp. $x_k$, such that $y_{i-1} < F_0(x_j) < y_i$, resp. $y_{i-1} < F_1(x_k) < y_i$.

\underline{Step 5:} Define a set $S$ on the real line as 
$$S = \bigcup_{z \in \mathbb{Z}} \left( C_{m} + \frac{z}{2^{M'}}\right),$$
where $C_{m}$ is the middle-thirds Cantor set constructed on the interval $[x_0, x_1]$. We simply cut the whole real line into closed adjacent  intervals of length $\frac{1}{2^{M'}}$ and construct middle-thirds Cantor sets on each of them. Note that every point $y_i$, resp $x_j$, coincides with endpoints of two adjacent middle-thirds Cantor segments of $S$. The set $S$ is not a Cantor set since we lost compactness but its intersection with any finite union of intervals of the form $[y_{i-1}, y_i]$, resp. $[x_{j-1}, x_j]$, is a Cantor set. We choose the system $G$ in such a way that $S$ is invariant under $G$.

\underline{Step 6:} Construct ``linked boxes" for the functions $F_0$ and $F_1$. The construction can be described informally by the following rules.\\
Rules for the function $F_0$:
\begin{enumerate}[(i)]
  \item Follow the function from the right to the left.
  \item Construct each box so that the function enters on the first available occasion from the top.
  \item Let each box have height $\frac{1}{2^M}$ so that the function decreases by only one level on each box.
  \item Let the function leave the box on the last available occasion on the left.
\end{enumerate}
Rules for the function $F_1$:
\begin{enumerate}[(i)]
  \item Follow the function from the left to the right.
  \item Construct each box so that the function enters on the first available occasion from the bottom.
  \item Let each box have height $\frac{1}{2^M}$ so that the function increases by only one level on each box.
  \item Let the function leave the box on the last available occasion on the right.
\end{enumerate}
Figure~\ref{Fig:M_levels} illustrates the procedure which can be formally described in the following way for $F_0$.
\begin{enumerate}[(i)]
  \item Let $j = 2^{M'}$ and $v=1$.
  \item Let $r_x = x_{j}$ and $i = \max\{ 0, 1, ..., 2^M : y_i \leq F_0(r_x) \}$. Set $r_y = y_i$ and $r_p = (r_x, r_y)$. Also set $k = i$, which is the number of boxes being constructed.
  \item Let $l_y = y_{i-1}$ and $j = \min\{ 0, 1, ..., 2^{M'} : F_0(x_j) \geq l_y\}$. Then $l_x = x_j$ and $l_p = (l_x, l_y)$.
  \item The points $l_p$ and $r_p$ form the box $B^v_{0} = X^v_{0} \times Y^v_{0} = [l_x, r_x] \times [l_y, r_y]$.
  \item If $i = 1$ then we are done. Otherwise set $v = v + 1$, $i = i - 1$, $r_p = l_p$, and jump to step (iii). 
\end{enumerate}
For the function $F_0$, the boxes $B^v_0 = X^v_0 \times Y^v_0,\ v = 1, ..., k$ have the following properties. Condition~\eqref{ConditionM-3} guarantees that at least one box was constructed. Any function defined on any box $B^v_0$, that is $G^v_0: X^v_0 \to Y^v_0$, is $\epsilon$-close to the restriction of the function $F_0$ to $X^v_0$, which is guaranteed by condition~\eqref{ConditionM-1} for the selection of $M$. Condition~\eqref{ConditionM-2} guarantees that no box $B^v_0$ intersects the diagonal, hence the respective function $G^v_0$ is always below the diagonal. Additionally, step 4 guarantees that every box has positive width.

\underline{Step 7:} For each box $B^v_0$, by Lemma~\ref{Lemma:OrderHomeomorphismExtension} there is an order-preserving homeomorphism $G^v_0: X^v_0 \to Y^v_0$ such that $G_0^v(X_0^v \cap S) = Y_0^v \cap S$. Let $G^v_0$ equal zero on the rest of the real line.

\underline{Step 8:} Finally, we are ready to construct the function $G_0$. Let $X_0 = \bigcup_{v=1}^{k} X^v_0$, $Y_0 = \bigcup_{v=1}^{k} Y^v_0$,  
\begin{align*}
L_x &= \min \{ X_0 \}, & L_y &= \min \{ Y_0 \}, \\
R_x &= \max \{ X_0 \}, & R_y &= \max \{ Y_0 \},
\end{align*}
that is $(L_x, L_y)$ is the point where our function under construction enters, resp. $(R_x, R_y)$ is the point where it leaves, the large box $\left[-\frac{1}{2}, \frac{1}{2}\right] \times \left[-\frac{1}{2}, \frac{1}{2}\right]$. Then $G_0: \mathbb{R} \to \mathbb{R}$ defined by
$$ G_0(x) = \begin{cases} 
      x + L_y,           & x \in (-\infty, L_x) \\
      \sum_v{G^v_0}(x),  & x \in [L_x, R_x], \\
      x + R_y,           & x \in (R_x, \infty)
   \end{cases}
$$
is $\epsilon$-close to $F_0$.

\underline{Step 9:} A similar procedure can be formalized using the informal description above for $F_1$, including the considerations to construct the function $G_1$. 

\underline{Step 10:} The approximating system $G$ is then defined by $G = (G_0, G_1, p_G = p_F)$ and the final remaining step is to transfer it back to the space $\RDSI$ that is $g = \phi^{-1}(G)$. Transfer the set $S$ to the unit interval as well and append the endpoints, i.e. $s = h^{-1}(S) \cup \{0, 1 \}$. By appending the endpoints, the set $s$ becomes closed and a Cantor set again.
\end{construction}

\begin{lemma} 
The set $s$ is invariant under $g$.
\end{lemma}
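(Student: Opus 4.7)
The plan is to reduce the statement to two inclusions on the real line and then verify them by a short case analysis exploiting the periodic structure of $S$. First, I would observe that invariance under $g=(g_0,g_1,p_g)$ means $g_\omega(x)\in s$ for every $x\in s$ and every finite word $\omega$, so by induction on $|\omega|$ it suffices to show $g_0(s)\subset s$ and $g_1(s)\subset s$. Since $g_0,g_1\in\CI$ fix both endpoints of $I$, the two points $\{0,1\}\subset s$ are automatic. For $x\in h^{-1}(S)$ we have $g_i(x)=h^{-1}(G_i(h(x)))$, so the whole claim reduces to the two inclusions $G_0(S)\subset S$ and $G_1(S)\subset S$ on the real line.

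Next I would highlight the periodicity of $S$. By construction $S=\bigcup_{z\in\mathbb{Z}}(C_m+z/2^{M'})$ is invariant under translation by any element of $(1/2^{M'})\mathbb{Z}$. Since $M'>M$, every $y_i=(-2^{M-1}+i)/2^M$ is also a multiple of $1/2^{M'}$, so every $y_i$ and every $x_j$ sits at the common boundary of two adjacent Cantor pieces of $S$. In particular $y_i\in S$ for all $i$, which is the property that makes $Y_0^v\cap S$ a Cantor set and legitimizes the application of Lemma~\ref{Lemma:OrderHomeomorphismExtension} in Step~7.

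Finally I would handle the three pieces of the piecewise definition of $G_0$ separately; the argument for $G_1$ is symmetric. For $y\in S\cap[L_x,R_x]$, the recursion in Step~6 forces consecutive boxes to share endpoints, so the intervals $X_0^v$ tile $[L_x,R_x]$; then $y\in X_0^v$ for some $v$ and the defining property of $G_0^v$ gives $G_0(y)=G_0^v(y)\in Y_0^v\cap S\subset S$. For $y\in S\cap(-\infty,L_x)$ the map $G_0$ is an affine translation $y\mapsto y+c$ whose shift (interpreted so as to meet the middle piece continuously, i.e.\ $c=L_y-L_x$) is a difference of points in $(1/2^{M'})\mathbb{Z}$, hence lies in $(1/2^{M'})\mathbb{Z}$ and preserves $S$; the region $(R_x,\infty)$ is handled identically. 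The only non-routine point is the bookkeeping required to confirm that $L_x,L_y,R_x,R_y$ all lie on the $(1/2^{M'})\mathbb{Z}$-lattice and that the $X_0^v$ tile $[L_x,R_x]$ cleanly — both follow by tracing the recursion that defines the boxes, and both are the reason for the lattice refinement $M'>M$ chosen in Step~4.
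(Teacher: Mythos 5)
Your proposal is correct and follows essentially the same route as the paper: reduce to showing $G_0(S)\subset S$ and $G_1(S)\subset S$, then split into the three pieces of the definition of $G_0$, using that the outer affine pieces translate by elements of $(1/2^{M'})\mathbb{Z}$ (which preserve the periodic set $S$) and that each box homeomorphism $G_0^v$ carries $X_0^v\cap S$ onto $Y_0^v\cap S$ by construction. Your extra bookkeeping (the lattice location of $L_x,L_y,R_x,R_y$, the tiling of $[L_x,R_x]$ by the $X_0^v$, and the continuity-consistent reading of the affine shift) only makes explicit what the paper leaves implicit.
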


\begin{proof}
We continue to use the notation from Construction~\ref{Rem:Construction_of_g}. It is sufficient to show that $S$ is invariant for $G=(G_0, G_1, p_G)$. Let $x\in S$.
\begin{enumerate}[(i)]
  \item If $x < L_x$, then $G_0(x) \in S$, because we note that $L_y$ is of the form $\displaystyle\frac{z_l}{2^{M'}}$ for some $z_l \in \mathbb{Z}$ and $x \mapsto x + L_y$ is then translation ``to the left" by a multiple of $\displaystyle\frac{1}{2^{M'}}$, see the definition of $S$;
  \item if $x > R_x$, then $G_0(x) \in S$, because again $R_y = \frac{z_r}{2^{M'}}, z_r \in \mathbb{Z}$;
  \item if $x \in [L_x, R_x]$, then $G_0(x) \in S$ by the definition of $G_0$.
\end{enumerate}
A similar analysis applies to $G_1$, hence $G_{\omega}(x) \in S$ for an arbitrary finite word $\omega$. Thus $\Orb(x)\subset S$.
\end{proof}

\begin{proof}(of Theorem \ref{Th:Dense_set_with_Lebesgue_support_zero})
Fix $f \in \RDSI$, $\epsilon>0$ and let $g$ be the system from Construction~\ref{Rem:Construction_of_g} and $s$ its invariant Cantor set. Then $d_m(f,g)<\epsilon$.
Further let $P_g$ be its corresponding Markov operator and $\mu_g$ its unique stationary measure given by Theorem~\ref{Th:Existence_of_unique_stationary_measure}. Then by \cite[Theorem~2]{CzudekSzarek}, for any $\mu \in \Mp(0, 1)$ we have
$$P_g^n\mu \xrightarrow{\text{w}^*} \mu_g.$$

We remark again that article~\cite{CzudekSzarek} uses the notion of an \emph{admissible iterated function system}, but the proof of \cite[Theorem~2]{CzudekSzarek} does not require continuous differentiability at endpoints, hence it is valid for $g$. Let  $a \in s$ be arbitrary and let $\mu$ be defined by
$$\mu(A) = \begin{cases}
  1, &a \in A, \\
  0, &a \not \in A.
           \end{cases}$$
Since $s$ is invariant, we have
$$P_g \mu(s) = p_g \mu(g^{-1}_0(s)) + (1- p_g) \mu(g^{-1}_1(s)) \geq p_g \mu(s) + (1- p_g) \mu(s) = 1,$$
therefore $P_g^n \mu (s) = 1$ for $n \in \mathbb{N}$ and clearly $\limsup\limits_{n \to \infty} P_g^n \mu (s) = 1$. By the Portmanteau Theorem, if $P_g^n\mu \xrightarrow{\text{w}^*} \mu_g$ then for the closed set $s$ we have $\limsup\limits_{n \to \infty} P_g^n \mu (s) \leq \mu_g(s)$, hence $\mu_g(s) = 1$. Since $s$ is closed this shows that $\supp(\mu_g) \subset s$. Therefore $\Leb(\supp(\mu_g))=0$.
\end{proof}

\subsection{Systems with fully supported stationary measures}
\strut \\ \indent
We can find another dense subset of $\RDSI$ using the tools from \cite{BaranskiSpiewak} for piecewise linear homeomorphisms on the unit interval. The space $\RDSI$ contains piecewise linear homeomorphisms and we utilize the so-called non-resonant case from \cite{BaranskiSpiewak} on a neighborhood of zero to approximate any given $f \in \RDSI$ to show that the systems with fully supported stationary measures form another dense subset of $\RDSI$.

\begin{theorem}\label{Th:Dense_set_with_full_support}
The systems $f \in \RDSI$ for which the unique stationary measure $\mu_f \in \Mp(0, 1)$ has full support form a dense subset of $\RDSI$.
\end{theorem}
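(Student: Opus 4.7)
The approach is to perturb $f$ slightly to obtain $g \in \RDSI$ whose maps $g_0, g_1$ act linearly on small neighborhoods of both endpoints, with the logarithms of the slopes at each endpoint being irrationally related (the non-resonant regime of \cite{BaranskiSpiewak}). I then use the self-similarity of $\supp(\mu_g)$ together with non-resonance to show the support covers neighborhoods of both endpoints, and propagate via the stationarity equation to conclude full support on $(0,1)$.

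For the construction, fix $f \in \RDSI$ and $\epsilon > 0$ and pass to the real line via $F = \phi(f)$, as in Step 1 of Construction~\ref{Rem:Construction_of_g}. Choose $R > 0$ so that $F_j(x) - x$ is within $\epsilon/2$ of its respective limit on each tail for $j \in \{0,1\}$; select translation amounts $a_0^\pm, a_1^\pm$ within $\epsilon/2$ of those limits with both ratios $a_0^-/a_1^-$ and $a_0^+/a_1^+$ irrational (possible since, in each relevant open product of intervals, rational-ratio pairs form a countable union of lines, hence are nowhere dense). Define $G_j$ to equal $x \mapsto x + a_j^-$ on $(-\infty, -R]$ and $x \mapsto x + a_j^+$ on $[R,\infty)$, and interpolate on $[-R, R]$ by any homeomorphism $\epsilon$-close to $F_j$ and lying strictly above (respectively below) the diagonal. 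Take $p_g = p_f$ and $g = \phi^{-1}(G)$; then $d_m(f,g) < \epsilon$ and the Lyapunov positivity survives by Corollary~\ref{Cor:xi_is_continuous_function} for $\epsilon$ small enough. Under $\phi^{-1}$, the linear tails of $G_j$ correspond to linear behavior $g_j(x) = e^{a_j^-} x$ on a right-neighborhood $[0,\alpha]$ of $0$ (and analogously near $1$).

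The stationarity $P_g \mu_g = \mu_g$ together with $p_g \in (0,1)$ yields $K := \supp(\mu_g) = \overline{g_0(K) \cup g_1(K)}$, so $K$ is forward-invariant under the semigroup generated by $g_0, g_1$. Since $\mu_g$ is atomless with $\mu_g(\{0,1\}) = 0$ (Theorem~\ref{Th:Existence_of_unique_stationary_measure}), $K \cap (0,1) \neq \emptyset$; iterating $g_0$ on any point of $K$ yields points of $K$ arbitrarily close to $0$, so in particular $K$ contains some $y^* \in (0, \alpha]$. On $[0,\alpha]$ the maps $g_0, g_1$ are multiplications and hence commute, so after first applying $g_0$ enough times to push $y^*$ close to $0$ (keeping us well inside the linear regime), the remaining compositions $g_0^{n_0} g_1^{n_1}$ produce values $e^{n_0 a_0^- + n_1 a_1^-} y^*$; by non-resonance the exponents form a dense subset of $\mathbb{R}$, yielding $K \supseteq (0, \alpha]$, and symmetrically $K \supseteq [1-\beta, 1)$. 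Finally, for any open $U \subset (0,1)$ the preimages $g_0^{-n}(U)$ are open intervals whose points satisfy $g_0^{-n}(x) \to 1$, so for large $n$ they meet $[1-\beta, 1) \subset K$, giving $\mu_g(g_0^{-n}(U)) > 0$; the iterated stationarity $\mu_g(U) \geq p_g^n\, \mu_g(g_0^{-n}(U))$ then forces $\mu_g(U) > 0$.

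The main obstacle is the density argument for the commuting-linear orbit near $0$: I need the semigroup $\{n_0 a_0^- + n_1 a_1^- : n_0, n_1 \geq 0\}$ to be dense in a suitable one-sided neighborhood of $0$ in log-coordinates, and I need the orbit of $y^*$ under these compositions to stay inside the linear region $[0, \alpha]$ throughout, not merely at the endpoint. This is resolved by first applying $g_0$ a large number of times (a contraction into $[0,\alpha]$) before using $g_1$, but writing this out carefully — including the case distinction when $\log f_0'(0) = 0$ or $\log f_1'(0)$ is very small so that non-resonance must be engineered rather than merely preserved — is the delicate step.
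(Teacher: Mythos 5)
Your proposal is correct, and its first half — perturbing the tails of $F_j=\phi(f_j)$ into translations whose amounts have irrational ratio — is exactly the perturbation the paper performs in Lemma~\ref{Lemma:Minimal_systems_are_dense} (the paper only needs to do this at $-\infty$, not at both ends). Where you genuinely diverge is in how full support is then extracted. The paper proves that the perturbed system $G\in\RDSR$ is \emph{minimal} (every orbit dense in $\mathbb{R}$) and then invokes Lemma~\ref{Lemma:minimal_system_has_measure_with_full_support}, quoted from Gelfert--Stenflo, to pass from minimality to full support of $\mu_G$. You instead work directly with $K=\supp(\mu_g)$: forward invariance of $K$ (which follows from stationarity and $p_g\in(0,1)$), the dense semigroup $\{n_0a_0^-+n_1a_1^-\}$ acting in the linear regime to get $K\supseteq(0,\alpha]$, and the iterated stationarity bound $\mu_g(U)\ge p_g^n\,\mu_g(g_0^{-n}(U))$ with $g_0^{-n}(U)$ accumulating at the opposite endpoint. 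This buys you a self-contained argument that never needs density of \emph{every} orbit — only invariance of the support — at the cost of rederiving, in this special case, essentially the content of the cited lemma; the paper's route is shorter but imports an external result. The delicate points you flag are real but resolvable exactly as the paper resolves them in Lemma~\ref{Lemma:Minimal_systems_are_dense}: order the composition as $g_1^{n_1}\circ g_0^{n_0}$ so that all intermediate iterates stay in the translation region whenever the final value does, and in the degenerate case $f_0'(0)=1$ choose $a_0^-$ strictly negative (still within $\epsilon/2$ of $0$) so that the below-diagonal condition and the sign hypotheses of the density lemma for $\{n_0a_0^-+n_1a_1^-\}$ are preserved; Lyapunov positivity survives since the perturbed limits can be taken arbitrarily close to the original ones.
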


As in \cite{BaranskiSpiewak}, the proof is based on the notion of minimality. Systems from $\RDSI$ are never minimal since the endpoint set is invariant, so we work instead on $\mathbb{R}$ where there are no endpoints.

The next lemma is standard, see eg.~\cite[Lemma 2]{Gelfert}.

\begin{lemma}\label{Lemma:minimal_system_has_measure_with_full_support}
If $F \in \RDSR$ is minimal, then its unique stationary measure $\mu_F$ has full support.
\end{lemma}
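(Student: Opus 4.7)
The plan is to prove the contrapositive in disguise: the support $S := \supp(\mu_F)$ is a nonempty closed subset of $\mathbb{R}$, and if I can show it is forward invariant under both $F_0$ and $F_1$, then minimality forces $S = \mathbb{R}$. This is the standard ``support of a stationary measure is invariant'' argument adapted to the random setting, and it needs nothing beyond stationarity, continuity of the $F_i$, and the assumption $p_F \in (0,1)$ already built into $\RDSR$.

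For the invariance step, I would start from the defining equation
$$\mu_F(A) = p_F\, \mu_F(F_0^{-1}(A)) + (1-p_F)\,\mu_F(F_1^{-1}(A))$$
applied to $A = \mathbb{R}\setminus S$. Since $A$ is open and disjoint from $\supp(\mu_F)$ it has $\mu_F$-measure $0$; as $p_F$ and $1-p_F$ are strictly positive and both summands on the right are nonnegative, each must vanish, i.e.\ $\mu_F(F_i^{-1}(\mathbb{R}\setminus S))=0$ for $i=0,1$. The set $F_i^{-1}(\mathbb{R}\setminus S)$ is open because $F_i$ is continuous, and an open set of $\mu_F$-measure zero must lie in the complement of the support. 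Hence $F_i^{-1}(\mathbb{R}\setminus S)\subset \mathbb{R}\setminus S$, which is equivalent to $F_i(S)\subset S$ for each $i\in\{0,1\}$.

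By induction on word length, $F_\omega(S)\subset S$ for every finite word $\omega \in \{0,1\}^n$, $n\in\mathbb{N}$. Fix any $x_0\in S$ (possible because $\mu_F$ is a probability measure on $\mathbb{R}$, so its support is nonempty). Then $\Orb(x_0)\subset S$, and by minimality of $F$ the orbit $\Orb(x_0)$ is dense in $\mathbb{R}$. Since $S$ is closed, $S=\mathbb{R}$, so $\mu_F$ assigns positive measure to every nonempty open set, i.e.\ has full support. There is no real obstacle here; the only step one might worry about is the elementary topological fact that an open $\mu_F$-null set is disjoint from $\supp(\mu_F)$, which is just the definition of support.
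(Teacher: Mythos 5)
Your proof is correct, and it is the standard argument (support of the stationary measure is closed, nonempty, and forward invariant under each map since $p_F,1-p_F>0$; minimality then forces it to be all of $\mathbb{R}$). The paper does not give its own proof of this lemma but simply cites it as standard from Gelfert--Stenflo, and your argument is essentially the one found there, so no comparison beyond that is needed.
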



Before we proceed with the next lemma, we pause to consider Lyapunov exponents on the real line. Recall from equation~(\ref{eq:Lyapunov_exponent_at_inft}) that the Lyapunov exponent for $F \in \RDSR$ on the real line at minus infinity takes the form
$$\Lambda_{-\infty} = p_F \lim_{x \to -\infty}(F_0(x) -x ) + (1-p_F) \lim_{x \to -\infty} (F_1(x) - x) > 0.$$
Because $F_0$ is below the diagonal on the whole real line then in order to satisfy positivity of the Lyapunov exponent, the necessary condition is
$$\lim_{x \to -\infty} (F_1(x) - x) > 0.$$
The point is that $\inf_{x < 0} \{ F_1(x) - x \} > 0$. 

\begin{lemma}\label{Lemma:Minimal_systems_are_dense}
Minimal systems form a dense subset of $\RDSR$.
\end{lemma}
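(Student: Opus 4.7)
The plan is to approximate any $F\in\RDSR$ by a minimal $G\in\RDSR$ whose component maps are pure translations outside a large compact set, with the two translation amounts at $-\infty$ chosen incommensurable (a non-resonance condition in the spirit of \cite{BaranskiSpiewak}); minimality of the approximation will then follow from a Kronecker density argument near $-\infty$ combined with forward iteration of $G_1$.

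\textbf{Step 1 (Construction of the approximation).} Given $F=(F_0,F_1,p_F)\in\RDSR$ and $\epsilon>0$, set $a_i=\lim_{x\to-\infty}(F_i(x)-x)$ and $b_i=\lim_{x\to+\infty}(F_i(x)-x)$, which exist by Lemma~\ref{Lemma:Limits_equal_to_log_of_derivatives}. Choose $R>0$ large enough that $|F_i(x)-x-a_i|<\epsilon/3$ on $(-\infty,-R]$ and $|F_i(x)-x-b_i|<\epsilon/3$ on $[R,\infty)$. Since irrationals are dense in $\mathbb{R}$, we may perturb $a_0,a_1$ by less than $\epsilon/3$ to obtain $\tilde a_0<0<\tilde a_1$ with $\tilde a_0/\tilde a_1\notin\mathbb{Q}$, and likewise pick $\tilde b_0<0<\tilde b_1$ (no resonance required). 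Define $G_i$ to equal $x\mapsto x+\tilde a_i$ on $(-\infty,-R]$, $x\mapsto x+\tilde b_i$ on $[R,\infty)$, and extend on $[-R,R]$ by any strictly increasing continuous interpolation that is $\epsilon$-close to $F_i$ and maintains $G_0(x)<x<G_1(x)$. For $\epsilon$ smaller than $\min\{\Lambda_{-\infty},\Lambda_{+\infty}\}$ the perturbed Lyapunov exponents remain positive, so $G\in\RDSR$ and $d_m(F,G)<\epsilon$.

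\textbf{Step 2 (Density of orbits in the left tail).} Because $G_0$ is uniformly below the diagonal and acts as translation by $\tilde a_0<0$ on $(-\infty,-R]$, iterating $G_0$ sends every point to $-\infty$; hence every orbit eventually visits $(-\infty,-R-1]$. Given a target $z\leq -R-1$ and $\delta>0$, first iterate $G_0$ further to reach some $x\leq -R-1$. Then apply $G_0^{n_0}$ followed by $G_1^{n_1}$; the trajectory visits only the values $x,\,x+\tilde a_0,\dots,x+n_0\tilde a_0,\,x+n_0\tilde a_0+\tilde a_1,\dots,x+n_0\tilde a_0+n_1\tilde a_1$, whose maximum is $\max(x,z)+O(\delta)\leq -R$, so every intermediate iterate lies in the translation regime and the composition is exactly $x+n_0\tilde a_0+n_1\tilde a_1$. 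By Kronecker's theorem, the irrationality of $\tilde a_0/\tilde a_1$ (together with $\tilde a_0<0<\tilde a_1$) makes $\{n_0\tilde a_0+n_1\tilde a_1:n_0,n_1\in\mathbb{Z}_{\geq 0}\}$ dense in $\mathbb{R}$, so $(n_0,n_1)$ can be chosen with $|n_0\tilde a_0+n_1\tilde a_1-(z-x)|<\delta$. Thus every orbit is dense in $(-\infty,-R-1]$.

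\textbf{Step 3 (Propagation to $\mathbb{R}$, and the main obstacle).} Fix $y\in\mathbb{R}$ and $\delta>0$. Since $G_1(x)-x\to\tilde b_1>0$ at $+\infty$ and $G_1>\id$, we have $G_1^N(-R-1)\to+\infty$ as $N\to\infty$; for $N$ large the intermediate value theorem applied to the continuous increasing map $G_1^N$ on $(-\infty,-R-1]$ yields $z\leq -R-1$ with $G_1^N(z)=y$. By Step~2 the orbit of any point contains some $w$ arbitrarily close to $z$, and uniform continuity of $G_1^N$ near $z$ then forces $G_1^N(w)$ within $\delta$ of $y$. This shows that every orbit is dense, i.e.\ $G$ is minimal. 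The main technical point is the bookkeeping in Step~2: one must arrange the word $\omega$ (the front-loading of all $G_0$'s before the $G_1$'s) so that every intermediate iterate stays in the pure-translation region $(-\infty,-R]$, converting the Kronecker-density of translation sums into genuine orbit density in the left tail; after that, propagation via $G_1^N$ is standard.
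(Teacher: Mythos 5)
Your proposal is correct and follows essentially the same route as the paper: replace $F_0,F_1$ near $-\infty$ by translations with irrationally related displacements, use density of $\{n_0\tilde a_0+n_1\tilde a_1\}$ to get orbit density in the left tail, and propagate to all of $\mathbb{R}$ by iterating $G_1$ (the paper phrases this last step via preimages $G_1^{-k}(U)\subset(-\infty,x]$ rather than the intermediate value theorem). The only inessential difference is that you also modify the right tail, whereas the paper leaves $F_i$ untouched on $[-R,\infty)$ and inserts a short monotone connector on $[-R-\epsilon,-R]$, which sidesteps the need to justify the existence of your interpolation on $[-R,R]$.
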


\begin{proof}
Let $F \in \RDSR$ and $\epsilon > 0$ be given. Let $L_i = \lim_{x \to -\infty} (F_i(x) - x)$ and decrease $\epsilon$ if necessary so that $\epsilon < L_1$ and let $R > 0$ be large enough such that
$$|(F_i(x) - x) - L_i| < \frac{\epsilon}{2},\quad\text{for}\ x \in (-\infty, -R].$$
Choose numbers $\eta_1 \in \left(L_1 - \frac{\epsilon}{2}, L_1\right)$ and $\eta_0 \in \left(L_0 - \frac{\epsilon}{2}, L_0 \right)$ such that $p_F \eta_0 + (1-p_F) \eta_1 > 0$ and $\frac{\eta_1}{\eta_0}$ is irrational and let $G_i: \mathbb{R} \to \mathbb{R}$ be
$$G_i(x) = \begin{cases}
  x + \eta_i, &x \in (-\infty, -R - \epsilon], \\
  F_i(x), &x \in [-R, \infty),
\end{cases}$$
and let $G_i$ be linear on $[-R-\epsilon, -R]$ to connect the ``left and right bits," so that the graph of $G_i$ connects the points $(-R-\epsilon, -R-\epsilon+\eta_i)$ and $(-R, F_i(-R))$, see Figure~\ref{Fig:minimal_system_G}. 
Keeping the probability unchanged, put $p_G = p_F$ and $G = (G_0, G_1, p_G)$. It is easy to see that $G \in \RDSR$ since $G_i$ are strictly increasing continuous functions, $G_0$ is below and $G_1$ above the diagonal, Lyapunov exponents are positive and also $d_m(F, G) < \epsilon$. Now fix $x \in \mathbb{R}$. Denoting the orbit of $x$ under $G$ by $\Orb(x)$, we need to show that $\overline{\Orb(x)} = \mathbb{R}$. Since $G_0$ is below the diagonal, we can replace $x$ by some iterated image $G_0^n(x)$ and assume that $x \in (-\infty, -R - \epsilon]$. It is a standard argument that for $\eta_0 < 0$, $\eta_1 > 0$, and $\frac{\eta_1}{\eta_0}$ irrational, the set of linear combinations $\{ s\eta_0 + t\eta_1 : s, t \in \mathbb{N} \}$ is dense in $\mathbb{R}$, see eg. \cite[Proposition~4.3]{BaranskiSpiewak}. Since $G_i$ restricted to $(-\infty, -R-\epsilon]$ is the translation by $\eta_i$, we have
$$G^t_1(G^s_0(x)) = x + s\eta_0 + t\eta_1,\quad\text{whenever}\ s, t \in \mathbb{N},\ s\eta_0 + t\eta_1 < 0.$$ 
This shows that the orbit of $x$ is dense in $(-\infty, x]$. Again, since $G_1$ is above the diagonal, then for any bounded open interval $U$ there exists $k \in \mathbb{N}$ such that $G^{-k}_1(U) \subset (-\infty, x]$, and therefore choosing appropriate $s, t \in \mathbb{N}$ we have $G^k_1(G^t_1(G^s_0(x))) \in U$. Therefore the orbit of an arbitrary point $x$ is dense in $\mathbb{R}$ and $G$ is minimal.
\end{proof}

\begin{figure}[h!]
  \centering
  \includegraphics[width=0.5\linewidth]{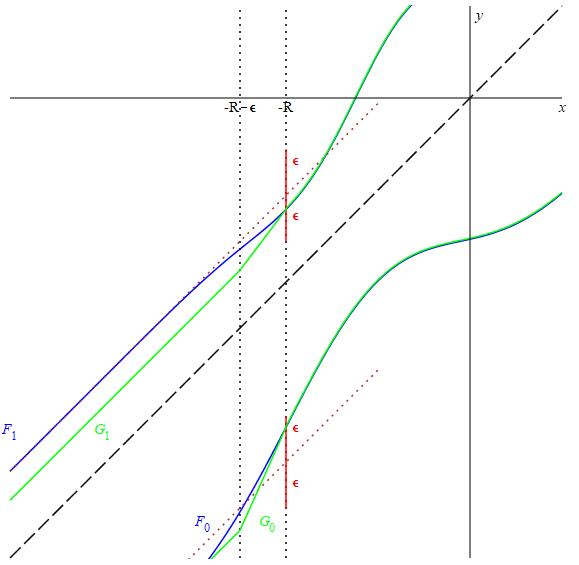}
  \caption{Minimal system $G$}
  \label{Fig:minimal_system_G}
\end{figure}

\begin{proof}(of Theorem~\ref{Th:Dense_set_with_full_support})
By Lemma~\ref{Lemma:minimal_system_has_measure_with_full_support} and Lemma~\ref{Lemma:Minimal_systems_are_dense} the systems $F \in \RDSR$ for which the unique stationary measure $\mu_F \in \Mp(\mathbb{R})$ has full support form a dense subset $Y$ in $\RDSR$. Therefore $\phi^{-1}(Y) \subset \RDSI$ is dense and the systems from $\phi^{-1}(Y)$ have fully supported stationary measures in $\Mp(0, 1)$.
\end{proof}

\section{Continuity of the stationary measure}\label{sec:continuity}

\begin{theorem}\label{Th:mu_start_is_continuous}
The operator $\psi: \RDSI \to \Mp(0, 1)$ which assigns to $f$ its unique stationary measure $\mu_f$ is continuous. 
\end{theorem}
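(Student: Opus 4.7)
The plan is to prove continuity sequentially. Let $f^{(n)} \to f$ in $(\RDSI, d_m)$; I must show $\mu_{f^{(n)}} \xrightarrow{w^*} \mu_f$. By a standard subsequence-of-subsequence argument, it suffices to show (i) that $\{\mu_{f^{(n)}}\}_{n \geq 1}$ is tight, and (ii) that every weak$^*$ cluster point is stationary for $f$; uniqueness from Theorem~\ref{Th:Existence_of_unique_stationary_measure} then forces every such cluster point to equal $\mu_f$, so the full sequence converges. It is most convenient to work on $\mathbb{R}$: via the isometry $\phi$ of Lemma~\ref{Lemma:Limits_equal_to_log_of_derivatives} and the pushforward $h_*$, tightness of $\{\mu_{f^{(n)}}\}$ on $(0,1)$ is equivalent to tightness of $\{\mu_{F^{(n)}}\} = \{h_* \mu_{f^{(n)}}\}$ on $\mathbb{R}$, and the escape-of-mass problem becomes the cleaner question of drift to $\pm\infty$.

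Tightness is the main obstacle. Here the positive Lyapunov exponent hypothesis is essential: it makes both endpoints repelling on average, so mass cannot accumulate there. Corollary~\ref{Cor:xi_is_continuous_function} gives continuity of the Lyapunov exponents, so there exists $\lambda > 0$ with $\Lambda_{\pm\infty}(F^{(n)}) \geq \lambda$ for all large $n$; moreover, convergence in $\RDSR$ is exactly uniform convergence of $F_i^{(n)} - \id$ on $\mathbb{R}$, so the limiting drifts at $\pm\infty$ are attained uniformly in $n$ outside some common compact set $[-R, R]$. I would then set up a Foster--Lyapunov estimate with $V(x) = e^{\alpha|x|}$: for sufficiently small $\alpha > 0$, a Taylor expansion of $P_{F^{(n)}} V / V$ at $\pm\infty$ yields the geometric drift
\[
P_{F^{(n)}} V \leq \gamma V + K
\]
uniformly in large $n$, for some $\gamma \in (0,1)$ and a constant $K$ absorbing the behaviour on $[-R, R]$. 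Applied to the stationary measure (with $\int V\,d\mu_{F^{(n)}} < \infty$ justified by iterating the drift from a point mass and passing to the weak$^*$ limit via Fatou/Portmanteau, which is valid since $V$ is nonnegative and lower semicontinuous), this yields $\int V\, d\mu_{F^{(n)}} \leq K/(1-\gamma)$ uniformly, and Markov's inequality gives $\mu_{F^{(n)}}(\{|x| > T\}) \leq e^{-\alpha T} K/(1-\gamma) \to 0$ as $T \to \infty$, uniformly in $n$. This is tightness.

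Given tightness, Prokhorov's theorem lets me pass to a weakly convergent subsequence $\mu_{f^{(n_k)}} \xrightarrow{w^*} \nu \in \Mp(0, 1)$, and it remains to show $P_f \nu = \nu$. By the Portmanteau theorem it suffices to test against bounded Lipschitz functions $\xi$ on $(0,1)$. Convergence $f^{(n)} \to f$ in $\CI$ together with inequality~\eqref{EQ:property_of_d_h} yields $f_i^{(n)} \to f_i$ uniformly on $(0,1)$ in the Euclidean metric, so $\xi \circ f_i^{(n)} \to \xi \circ f_i$ uniformly, and $p_{f^{(n)}} \to p_f$. Combining uniform convergence of the integrand with weak$^*$ convergence of the subsequence permits passing to the limit in $\langle P_{f^{(n_k)}}\mu_{f^{(n_k)}}, \xi\rangle = \langle \mu_{f^{(n_k)}}, \xi\rangle$ to obtain $\langle P_f \nu, \xi\rangle = \langle \nu, \xi\rangle$. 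Hence $\nu$ is stationary for $f$, so $\nu = \mu_f$ by Theorem~\ref{Th:Existence_of_unique_stationary_measure}, completing the proof.
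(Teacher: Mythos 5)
Your proof is correct, and at its core it runs on the same engine as the paper's: the positive Lyapunov exponent, made uniform over a neighbourhood of $f$ via Corollary~\ref{Cor:xi_is_continuous_function}, is converted through the Taylor expansion of $p\lambda_0^{-\alpha}+(1-p)\lambda_1^{-\alpha}$ into a contraction factor strictly less than $1$ near the endpoints, and the conclusion follows by a subsequence-of-subsequences argument using uniqueness of the stationary measure for the limit system. The packaging differs in two ways. First, where you run a Foster--Lyapunov drift estimate on $\mathbb{R}$ with $V(x)=e^{\alpha|x|}$ to obtain uniform exponential moments and hence tightness, the paper instead proves (Lemma~\ref{Lemma:existence_of_epsilon_m_and_aplha}) that the class $\NMalpha=\{\mu:\mu((0,x))\le Mx^{\alpha},\ \mu((1-x,1))\le Mx^{\alpha}\}$ is invariant under $P_g$ for all $g$ in a ball around $f$; these are essentially equivalent, since under $h_*$ the bound $\mu((0,x))\le Mx^{\alpha}$ is precisely an exponential tail bound on $\mathbb{R}$, and your Markov-inequality step recovers such a tail bound from the moment bound. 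The paper's formulation has the minor advantage of needing only invariance of a weak$^*$-compact set (Lemma~\ref{Lemma:MNalpha_is_compact}) rather than a separate justification that $\int V\,d\mu_{F^{(n)}}$ is finite, though your Fatou/Portmanteau argument for that point (iterating the drift from a point mass and using asymptotic stability from \cite{CzudekSzarek}) is sound. Second, for identifying cluster points the paper cites \cite[Lemma~7]{CzernousSzarek} (quoted as Lemma~\ref{Lemma:mu_n_admits_sequence_weakly_convergent}), whereas you prove the needed statement directly by passing to the limit in $\langle P_{f^{(n_k)}}\mu_{n_k},\xi\rangle=\langle\mu_{n_k},\xi\rangle$ against bounded Lipschitz $\xi$, using that $d_{\CI}$-convergence dominates uniform Euclidean convergence by inequality~\eqref{EQ:property_of_d_h}; this makes your argument self-contained where the paper leans on an external lemma. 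Both routes are valid.
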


The proof extends an argument from \cite{CzernousSzarek}. Let $\NMalpha \subset \Mp(0, 1)$ be defined by
$$\NMalpha = \{ \mu \in \Mp(0, 1) : \mu((0, x)) \leq Mx^{\alpha}\ \text{and}\ \mu((1-x, 1)) \leq Mx^{\alpha} \},$$
where $M > 0$ and $\alpha \in (0, 1)$. It follows from the regularity of measures that $\NMalpha$ is the same set as in \cite{CzernousSzarek} where closed intervals $\mu([0, x]) \leq Mx^{\alpha}$ and $\mu([1-x, 1]) \leq Mx^{\alpha}$ are used.

\begin{lemma}\label{Lemma:MNalpha_is_compact}
The set $\NMalpha$ is a compact subset of $\Mp(0, 1)$ in the $\text{weak}^{*}$ topology.
\end{lemma}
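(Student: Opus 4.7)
The plan is to verify the two hypotheses of Prokhorov's theorem on the Polish space $(0,1)$: first that $\NMalpha$ is tight, which yields relative weak$^*$ compactness in $\Mp(0,1)$, and second that $\NMalpha$ is closed under weak$^*$ limits, which combined with relative compactness upgrades to the desired compactness of $\NMalpha$ itself.

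For tightness, given $\epsilon > 0$, I would pick $x \in (0, 1/2)$ small enough that $2Mx^{\alpha} < \epsilon$. Then $K_\epsilon = [x, 1-x]$ is a compact subset of $(0,1)$, and for every $\mu \in \NMalpha$ one has
$$\mu(K_\epsilon) = 1 - \mu((0,x)) - \mu((1-x,1)) \geq 1 - 2Mx^{\alpha} > 1-\epsilon.$$
Note that the use of \emph{compact subsets of $(0,1)$} (as opposed to $[0,1]$) is essential; this is precisely what the two-sided polynomial decay buys us, and is the reason the hypothesis $\alpha > 0$ enters.

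For weak$^*$ closedness, suppose $\{\mu_n\} \subset \NMalpha$ and $\mu_n \xrightarrow{\text{w}^*} \mu$ in $\Mp(0,1)$. For each fixed $x \in (0,1)$, the set $(0,x)$ is open in $(0,1)$, so by the Portmanteau theorem
$$\mu((0,x)) \leq \liminf_{n\to\infty} \mu_n((0,x)) \leq Mx^{\alpha},$$
and an identical argument at the right endpoint yields $\mu((1-x,1)) \leq Mx^{\alpha}$. Hence $\mu \in \NMalpha$.

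Combining the two steps, Prokhorov's theorem applied in the Polish space $(0,1)$ tells us that a tight set of probability measures has compact weak$^*$ closure; since $\NMalpha$ equals its own weak$^*$ closure by the second step, it is compact. I do not anticipate any genuine obstacle; the only subtlety worth flagging is that working in $\Mp(0,1)$ rather than in $\Mp([0,1])$ forces us to certify that no mass escapes to the endpoints along a weakly convergent sequence, and the tightness bound $2Mx^{\alpha}$ delivers exactly this.
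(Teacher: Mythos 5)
Your proposal is correct and follows essentially the same route as the paper: tightness plus Prokhorov's theorem gives relative weak$^*$ compactness, and the Portmanteau inequality for the open sets $(0,x)$ and $(1-x,1)$ shows $\NMalpha$ is weak$^*$ closed. The only difference is that you spell out the tightness estimate explicitly, which the paper leaves as an assertion.
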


\begin{proof}
The set $\NMalpha$ is tight in $\Mp(0, 1)$ and so its closure is compact by Prokhorov's Theorem. It remains to prove that the set is closed. Suppose $\mu \in \Mp(0, 1)$, $\mu_n \in \NMalpha$ and $\mu_n \xrightarrow{w^*} \mu$. By the Portmanteau Theorem we have that $\mu((0, x)) \leq \liminf \mu_n((0, x)) \leq Mx^{\alpha}$ for every $x$. The analogous inequality holds at the other endpoint.
\end{proof}

The following lemma is a version of \cite[Lemma~2]{CzernousSzarek} uniformized so it holds on a whole neigbourhood of $f$.

\begin{lemma}\label{Lemma:existence_of_epsilon_m_and_aplha}
For arbitrary $f \in \RDSI$ there exist $\epsilon > 0$, $M > 0$, and $\alpha \in (0, 1)$ such that
$$P_g(\NMalpha) \subset \NMalpha\quad \text{for every}\ g \in B_{\epsilon}(f).$$
\end{lemma}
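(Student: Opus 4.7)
The plan is to pass to $\RDSR$ via the isometry $\phi$, where the tail condition defining $\NMalpha$ becomes an exponential bound. Indeed, if $\nu = h_{*}\mu$ and $\mu \in \NMalpha$ then for $t < 0$, $\nu((-\infty,t)) = \mu((0, e^{t}/2)) \le M 2^{-\alpha} e^{\alpha t}$, and symmetrically $\nu((s,\infty)) \le M 2^{-\alpha} e^{-\alpha s}$ for $s > 0$. Since $G_i$ is increasing, the operator identity becomes
\[
P_G\nu((-\infty,t)) = p_G\,\nu((-\infty, G_0^{-1}(t))) + (1-p_G)\,\nu((-\infty, G_1^{-1}(t))),
\]
so the goal reduces to controlling $G_i^{-1}(t)-t$ as $t \to -\infty$, uniformly for $g$ in a neighborhood of $f$, and analogously at $+\infty$.

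Define $\Phi(\alpha, g) = p_g (g_0'(0))^{-\alpha} + (1-p_g)(g_1'(0))^{-\alpha}$. Then $\Phi(0,g) = 1$ and $\partial_\alpha\Phi(0,g) = -\Lambda_0(g) < 0$, so by joint continuity (combining Corollary~\ref{Cor:xi_is_continuous_function} with the continuity of $g \mapsto p_g$) there exist $\alpha \in (0,1)$, $\eta > 0$ and $\epsilon_1 > 0$ such that $\Phi(\alpha, g) \le 1 - \eta$ for every $g \in B_{\epsilon_1}(f)$. The same construction with $\Lambda_1$ at the right endpoint yields a uniform constant (shrinking $\alpha$ and $\epsilon_1$ if necessary).

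Next, use that $F_i(x) - x \to \log f_i'(0)$ to pick $T_0 < 0$ with $|F_i(x) - x - \log f_i'(0)| < \gamma$ for $x \le T_0$, where $\gamma > 0$ is chosen small enough that $e^{3\gamma\alpha}(1-\eta) < 1$. Now shrink $\epsilon_1$ further if needed: since $d_{\CI} = d_{\CR}$ is the supremum metric on $\mathbb{R}$ (Corollary~\ref{Cor:CI_is_complete}), if $g \in B_{\epsilon_1}(f)$ with $\epsilon_1 < \gamma$ then $|G_i(x) - x - \log g_i'(0)| < 2\gamma$ on $(-\infty, T_0]$. A standard inversion argument (using monotonicity of $G_i$) then gives $|G_i^{-1}(t) - t + \log g_i'(0)| < 3\gamma$ whenever $t \le T_1 := \min(F_0(T_0), F_1(T_0)) - \epsilon_1$.

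Finally, for $\nu$ derived from $\mu \in \NMalpha$ and $t \le T_1$,
\[
P_G\nu((-\infty,t)) \le M 2^{-\alpha} e^{\alpha t}\bigl(p_G e^{\alpha(G_0^{-1}(t)-t)} + (1-p_G)e^{\alpha(G_1^{-1}(t)-t)}\bigr) \le M 2^{-\alpha} e^{\alpha t}\, e^{3\gamma\alpha}\Phi(\alpha,g) \le M 2^{-\alpha} e^{\alpha t}.
\]
For $t \in (T_1, 0)$ the bound $\nu((-\infty,t)) \le 1$ suffices, provided $M$ was chosen large enough that $M 2^{-\alpha} e^{\alpha T_1} \ge 1$. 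The symmetric analysis at $+\infty$ controls the other tail. Transferring back with $h^{-1}$ gives $P_g\mu \in \NMalpha$ for all $g \in B_{\epsilon_1}(f)$. The main obstacle is the uniform linearization step: because our space permits no smoothness away from the endpoints, we must exploit that the supremum metric on $\mathbb{R}$ preserves the asymptotic behavior $G_i(x) - x \to \log g_i'(0)$ uniformly over the neighborhood, which is precisely what the metric $d_h$ was designed to do.
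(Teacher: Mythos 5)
Your proof is correct and follows essentially the same route as the paper's: linearize $G_i$ (and hence $G_i^{-1}$) near $-\infty$ using the translation structure of the metric, obtain a uniform contraction factor strictly below $1$ for the tail estimate from the derivative of $\alpha\mapsto\Phi(\alpha,g)$ at $\alpha=0$ (the paper's $\xi(\alpha,\beta)$ with fixed sub-slopes $\lambda_i<f_i'(0)$ plays exactly the role of your $e^{3\gamma\alpha}\Phi(\alpha,g)$), and choose $M$ large to absorb the compact middle. The only differences are cosmetic (you estimate the exponential tails of $\nu=h_*\mu$ on $\mathbb{R}$ rather than transferring back to power-law tails on $(0,1)$, and your intermediate bound $2\gamma$ should be $3\gamma$, which does not affect the argument since $\gamma$ is free).
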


\begin{proof}
The proof is given on a neighborhood of zero. Analogous reasoning applies at the other endpoint with possibly larger $M$ and smaller $\epsilon$ and $\alpha$.

From Corollary~\ref{Cor:xi_is_continuous_function} it follows that $\Lambda_0: \RDSI \to \mathbb{R}$ is a continuous function. Therefore for given $f \in \RDSI$ we can choose $\lambda_0 < f'_0(0)$ and $1 < \lambda_1 < f'_1(0)$ such that
$$\Lambda = p_f \log(\lambda_0) + (1-p_f) \log(\lambda_1) > 0.$$
Transfer $f$ to $F=\phi(f) \in \RDSR$ and choose
$$\epsilon = \min_{i \in \{0, 1\}} \left\{ \frac{\lim_{x \to -\infty}{(F_i(x)-x)} - \log(\lambda_i)}{2} \right\}.$$ 
Continuing in $\RDSR$, there exists $R$ large enough such that 
$$\left|(F_i(x)-x) - \lim_{x\to-\infty}(F_i(x) - x) \right| < \epsilon,\quad \text{for}\ x < -R,\ i \in \{0, 1\}.$$
Consequently for $G \in B_{\epsilon}(F)$, we have
$$G_i(x) > \log(\lambda_i)+x,\quad \text{for}\ x < -R,\ i \in \{0, 1\},$$
which translates back to $\RDSI$ for $g \in B_{\epsilon}(f)$ and letting $x_0 = h^{-1}(-R)$ to
$$g_i(x) > \lambda_i x,\quad \text{for}\ x \in (0, x_0),\ i \in \{0, 1\}.$$
A similar analysis for the inverse $f_i^{-1}$, with the possibility of increasing $R$ and adjusting $x_0 = h^{-1}(-R)$, yields
$$g^{-1}_i(x) < \frac{x}{\lambda_i},\quad \text{for}\ x \in (0, x_0),\ i \in \{0, 1\},$$
considering that the related image $F_i^{-1}$ is just a ``flip over the diagonal."
Let $$\xi(\alpha, \beta) = (p_f+\beta) e^{-\alpha \log(\lambda_0)} + (1-(p_f+\beta)) e^{-\alpha \log(\lambda_1)}.$$ We expand the terms $e^{-\alpha \log(\lambda_i)}$ by their Taylor series at $0$ and get
\begin{align*}
\xi(\alpha, \beta) &= (p_f+\beta)(1 - \alpha\log(\lambda_0) + \mathcal{O}(\alpha^2)) + (1-(p_f+\beta)) (1 - \alpha\log(\lambda_1) + \mathcal{O}(\alpha^2)) \\
&= 1 - \alpha((p_f+\beta) \log(\lambda_0) + (1-(p_f+\beta))\log(\lambda_1)) + \mathcal{O}(\alpha^2).
\end{align*}
For $\beta = 0$, we have 
\begin{align*}
\xi(\alpha, 0) &= 1 - \alpha(p_f \log(\lambda_0) + (1-p_f)\log(\lambda_1)) + \mathcal{O}(\alpha^2) \\
&= 1 - \alpha\Lambda + \mathcal{O}(\alpha^2)
\end{align*}
Fix sufficiently small $\alpha>0$ so that $\xi(\alpha,0)<1$. By continuity of $\xi$, there exists $\delta > 0$ such that $\xi(\alpha, \beta) < 1$ for $\beta \in (-\delta, \delta)$. If $\delta < \epsilon$ then we adjust $\epsilon = \delta$. Choose $M > 0$ such that $$Mx_0^{\alpha} > 1.$$
Let $g \in B_{\epsilon}(f)$, $\mu \in \NMalpha$ be arbitrary. If we assume $x \in (0, x_0)$ then
\begin{align*}
P_g \mu((0, x)) &= p_g\mu((0, g_0^{-1}(x))) + (1-p_g)\mu((0, g_1^{-1}(x))) \\
&\leq p_g\mu\left(\left(0, \frac{x}{\lambda_0}\right)\right) + (1-p_g)\mu\left(\left(0, \frac{x}{\lambda_1}\right)\right) \\
&\leq M \left(p_g \frac{x^{\alpha}}{\lambda_0^{\alpha}} + (1-p_g) \frac{x^{\alpha}}{\lambda_1^{\alpha}} \right) \\
&= M x^{\alpha} (p_g e^{-\alpha \log(\lambda_0)} + (1-p_g) e^{\log(-\alpha \log(\lambda_1))} \\
&= M x^{\alpha} \left( (p_f+\beta) e^{-\alpha \log(\lambda_0)} + (1-(p_f+\beta)) e^{-\alpha\log(\lambda_1)}\right) \\
&= Mx^{\alpha} \xi(\alpha, \beta) \\
&\leq Mx^{\alpha}.
\end{align*}
Conversely, if $x \in [x_0, 1)$ then $P_g\mu((0, x)) \leq 1 \leq Mx_0^{\alpha} \leq Mx^{\alpha}$.
\end{proof}

\begin{corollary}
The stationary measure $\mu_g$ belongs to $\NMalpha$ for every $g \in B_{\epsilon}(f)$ where $M$, $\alpha$ and $\epsilon$ are as in Lemma~\ref{Lemma:existence_of_epsilon_m_and_aplha}.
\end{corollary}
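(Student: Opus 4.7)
The plan is to deduce $\mu_g \in \NMalpha$ from the invariance $P_g(\NMalpha) \subset \NMalpha$ established in Lemma~\ref{Lemma:existence_of_epsilon_m_and_aplha}, by running an orbit of the Markov operator starting from a convenient initial measure in $\NMalpha$ and passing to the limit.

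First I would observe that $\NMalpha$ is nonempty: after enlarging $M$ in Lemma~\ref{Lemma:existence_of_epsilon_m_and_aplha} if necessary (which is harmless since the estimate in that lemma's proof remains valid for larger $M$), the Dirac measure $\delta_{1/2}$ belongs to $\NMalpha$, because $\mu((0,x)) = 0$ for $x \le 1/2$ and $\mu((0,x)) = 1 \le Mx^\alpha$ for $x > 1/2$ whenever $M \ge 2^\alpha$; the corresponding estimate at the right endpoint is identical. Pick any such $\mu_0 \in \NMalpha$.

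Next, by the invariance just recalled, $P_g^n \mu_0 \in \NMalpha$ for every $n \in \mathbb{N}$ and every $g \in B_\epsilon(f)$. On the other hand, \cite[Theorem~2]{CzudekSzarek}---whose applicability to systems in $\RDSI$ was already noted in the proof of Theorem~\ref{Th:Dense_set_with_Lebesgue_support_zero}---gives $P_g^n \mu_0 \xrightarrow{w^*} \mu_g$. Finally, Lemma~\ref{Lemma:MNalpha_is_compact} shows that $\NMalpha$ is closed in the weak$^*$ topology, so the weak$^*$ limit $\mu_g$ lies in $\NMalpha$.

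No step here is really hard: the only subtlety is making sure that the very definition of $\NMalpha$ does not rule out every probability measure, which is why the initial adjustment of $M$ (and the observation that the invariance is monotone in $M$) is worth recording explicitly. Once that is done, the proof is a three-line combination of invariance, Czudek--Szarek convergence, and closedness of $\NMalpha$.
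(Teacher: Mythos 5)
Your proof is correct and follows essentially the same route as the paper: start from a measure in $\NMalpha$, iterate using the invariance from Lemma~\ref{Lemma:existence_of_epsilon_m_and_aplha}, invoke the asymptotic stability $P_g^n\mu \xrightarrow{w^*} \mu_g$ from \cite[Theorem~2]{CzudekSzarek}, and conclude by the weak$^*$ closedness of $\NMalpha$ from Lemma~\ref{Lemma:MNalpha_is_compact}. Your explicit check that $\NMalpha$ is nonempty is a sensible addition the paper leaves implicit (and in fact no enlargement of $M$ is needed, since the lemma already imposes $Mx_0^{\alpha}>1$ with $x_0<\tfrac12$, which forces $M>2^{\alpha}$ and hence $\delta_{1/2}\in\NMalpha$).
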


\begin{proof}
By \cite[Theorem~2]{CzudekSzarek}, $P_g^n \mu \xrightarrow{\text{w}^*} \mu_g$ in the weak$^*$ topology for any measure $\mu \in \Mp(0, 1)$, that is the stationary measure is asymptotically stable. Therefore taking $\mu$ from $\NMalpha$ it follows from Lemmas~\ref{Lemma:MNalpha_is_compact} and \ref{Lemma:existence_of_epsilon_m_and_aplha} that $\mu_g \in \NMalpha$.
\end{proof}

\begin{lemma}\cite[Lemma~7]{CzernousSzarek}\label{Lemma:mu_n_admits_sequence_weakly_convergent}
Suppose that the iterated function systems $f \in \RDSI$ and $f_1, f_2, ... \in \RDSI$, are such that, for some $M > 0$ and $\alpha \in (0, 1)$,
$$\mu_1, \mu_2, ... \in \NMalpha\quad \text{and}\quad \lim_{n \to \infty} d_0(f_n, f) = 0,$$
where $\mu_n \in \Mp(0, 1)$ is the unique stationary measure for $f_n$ and
$$d_0(f, g) = |p_f - p_g| + ||f_0 - g_0||_{\sup} + ||f_1 - g_1||_{\sup}.$$
Then $(\mu_n)_n$ admits a subsequence weak$^*$ convergent to a unique stationary distribution $\mu_f$ for $f$; moreover, $\mu_f \in \NMalpha$.
\end{lemma}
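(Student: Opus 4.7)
The plan is to extract a weak$^*$ convergent subsequence using compactness of $\NMalpha$, show that its limit is stationary for $f$ by passing to the limit in the stationary equation $P_{f_n}\mu_n = \mu_n$, and then appeal to uniqueness of the stationary measure (Theorem~\ref{Th:Existence_of_unique_stationary_measure}) to conclude it equals $\mu_f$.

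First, since $\{\mu_n\} \subset \NMalpha$ and $\NMalpha$ is weak$^*$ compact by Lemma~\ref{Lemma:MNalpha_is_compact}, I would extract a subsequence $\mu_{n_k} \xrightarrow{w^*} \mu_*$ with $\mu_* \in \NMalpha$. It then suffices to show $P_f \mu_* = \mu_*$, since by Theorem~\ref{Th:Existence_of_unique_stationary_measure} the stationary measure for $f$ is unique; this will give $\mu_* = \mu_f$ and, since the argument applies to any weak$^*$ cluster point of $(\mu_n)$, the conclusion of the lemma.

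To verify stationarity, I would test against an arbitrary bounded continuous $\xi$ on $(0,1)$ with $|\xi| \le B$. The left side $\langle \mu_{n_k}, \xi\rangle \to \langle \mu_*, \xi\rangle$ by weak$^*$ convergence. The right side is
\begin{equation*}
\langle P_{f_{n_k}} \mu_{n_k}, \xi\rangle = p_{f_{n_k}} \int \xi \circ f_{n_k,0}\, d\mu_{n_k} + (1-p_{f_{n_k}}) \int \xi \circ f_{n_k,1}\, d\mu_{n_k},
\end{equation*}
and I want this to converge to $\langle P_f \mu_*, \xi\rangle$. The probabilities converge by hypothesis, so the task reduces to showing $\int \xi \circ f_{n_k, i}\, d\mu_{n_k} \to \int \xi \circ f_i\, d\mu_*$ for $i \in \{0,1\}$.

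The main obstacle is controlling $\xi \circ f_{n,i} - \xi \circ f_i$ near the endpoints, since $\xi$ need not be uniformly continuous on $(0,1)$. I would split the difference as
\begin{equation*}
\int \xi \circ f_{n,i}\, d\mu_n - \int \xi \circ f_i\, d\mu_*
= \int (\xi \circ f_{n,i} - \xi \circ f_i)\, d\mu_n + \left(\int \xi \circ f_i\, d\mu_n - \int \xi \circ f_i\, d\mu_*\right).
\end{equation*}
The parenthesized term tends to zero by weak$^*$ convergence, since $\xi \circ f_i$ is bounded and continuous on $(0,1)$. For the first term, given $\epsilon > 0$ choose $\delta > 0$ so small that $2M\delta^\alpha < \epsilon/(4B)$; then $\mu_n((0,\delta) \cup (1-\delta,1)) \le 2M\delta^\alpha$ uniformly in $n$. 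On the compact set $K = [\delta, 1-\delta]$, the image $f_i(K)$ is a compact subinterval of $(0,1)$ because $f_i(0)=0$, $f_i(1)=1$ and $f_i$ is strictly increasing; by uniform convergence $f_{n,i} \to f_i$ on $[0,1]$, for large $n$ the images $f_{n,i}(K)$ lie in a fixed slight enlargement of $f_i(K)$ which is still compactly contained in $(0,1)$. On this enlargement $\xi$ is uniformly continuous, so $\xi \circ f_{n,i} \to \xi \circ f_i$ uniformly on $K$. Combining the tail estimate with this uniform convergence makes the first term eventually smaller than $\epsilon$, proving $P_f \mu_* = \mu_*$ and completing the argument.
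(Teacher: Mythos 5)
Your proposal is correct and complete. Note that the paper itself gives no proof of this lemma --- it is imported verbatim as \cite[Lemma~7]{CzernousSzarek} --- so there is no in-paper argument to compare against; your proof (extract a weak$^*$ limit from the compact set $\NMalpha$, pass to the limit in $P_{f_n}\mu_n=\mu_n$ using the uniform tail bound $\mu_n((0,\delta)\cup(1-\delta,1))\leq 2M\delta^{\alpha}$ to compensate for the failure of uniform continuity of test functions near the endpoints, then invoke uniqueness from Theorem~\ref{Th:Existence_of_unique_stationary_measure}) is exactly the standard route and correctly handles the one delicate point, namely keeping the images $f_{n,i}(K)$ inside a fixed compact subset of $(0,1)$ where $\xi$ is uniformly continuous.
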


\begin{proof}(of Theorem~\ref{Th:mu_start_is_continuous})
Let $f_n$ be a sequence of systems from $\RDSI$ such that $f_n \to f$, that is $d_m(f_n, f) \to 0$. Let $\mu_n$ denote the unique stationary measure for $f_n$ and $\mu_f$ the unique stationary measure for $f$. By Lemma~\ref{Lemma:existence_of_epsilon_m_and_aplha}, there exist $M$, $\alpha$, and $\epsilon$ such that for $g \in B_{\epsilon}(f)$ we have $\mu_g\in \NMalpha$, therefore there exists $N \in \mathbb{N}$ such that $\mu_n \in \NMalpha$ for $n > N$. Let $f_n$ denote the tail of the sequence which is eventually in the $\epsilon$-ball and let $\mu_n$ again denote the associated stationary measures.

Let $g_k = f_{n_k}$ be any subsequence of $f_n$ and $\nu_k = \mu_{n_k}$. Then the sequences $\nu_k$ and $g_k$ satisfy the assumptions of Lemma~\ref{Lemma:mu_n_admits_sequence_weakly_convergent}
\begin{enumerate}[(i)]
  \item $\lim_{k \to \infty}{d_0(g_k, f)} = 0$ for $d_0(g_k, f) \leq 3 d_m(g_k, f)$ by Equation~\eqref{EQ:property_of_d_h} and the definition of $d_m$, and
  \item $\nu_k \in \NMalpha$ for all $k$.
\end{enumerate}
Then $\nu_k$ admits a weak$^*$ convergent subsequence to $\mu_f$, that is $\nu_{k_l} \xrightarrow{w^*} \mu_f$. Since this is true for every subsequence $g_k$ of the sequence $f_n$, we conclude that $\mu_n$ converges to $\mu_f$. 
\end{proof}

\section{Main results}

The continuity of the operator $\psi:\RDSI\to\Mp(0,1)$ which assigns to a system $f$ its unique stationary measure $\mu_f$ allows us to show that the dense sets of systems identified in Section~\ref{sec:dense_subsets} are residual. We use the following general result from Lenz and Stollmann.

\begin{theorem}\label{Fact:Generic_measures}
\cite[Theorem~2.2]{LenzStollmann} 
\begin{enumerate}[(i)]
  \item The set of measures $\mu \in \Mp(0, 1)$ that are singular form a $G_\delta$ subset of $\Mp(0, 1)$.
  \item The set of measures $\mu \in \Mp(0, 1)$ that are fully supported form a $G_\delta$ subset of $\Mp(0, 1)$.
\end{enumerate}
\end{theorem}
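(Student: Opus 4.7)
The plan is to handle the two parts separately, in both cases exploiting the basic principle that for every open set $U \subset (0,1)$ and every $c \in \mathbb{R}$, the set $\{\mu \in \Mp(0,1) : \mu(U) > c\}$ is open in the weak$^*$ topology. This is a direct consequence of the Portmanteau Theorem, which gives lower semi-continuity of the map $\mu \mapsto \mu(U)$ for open $U$.

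For part (ii), I would fix a countable basis $\{U_n\}_{n \in \mathbb{N}}$ for the topology of $(0,1)$ consisting of nonempty open intervals with rational endpoints. A probability measure $\mu$ has full support precisely when $\mu(U_n) > 0$ for every $n$, so
\[
\{\mu \in \Mp(0,1) : \mu \text{ has full support}\} = \bigcap_{n \in \mathbb{N}} \{\mu : \mu(U_n) > 0\},
\]
a countable intersection of open sets, hence $G_\delta$.

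For part (i), I would first recast singularity in terms of countable open approximations. Using outer regularity of the Lebesgue measure together with the Radon property of every $\mu \in \Mp(0,1)$, I claim that $\mu$ is singular if and only if for every $n \in \mathbb{N}$ there exists a set $V$ from a fixed countable family $\mathcal{V}$ (say all finite unions of open intervals with rational endpoints) such that $\Leb(V) < 1/n$ and $\mu(V) > 1 - 1/n$. The forward direction: given a Borel set $A$ with $\mu(A) = 1$ and $\Leb(A) = 0$, cover $A$ by an open set $W$ of Lebesgue measure less than $1/n$, then inner-approximate $W$ by a rational finite union $V \subset W$ on which $\mu(V) > 1 - 1/n$. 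The backward direction uses a subsequence argument: pick $V_{n_k}$ with $\Leb(V_{n_k}) < 2^{-k}$; a Borel--Cantelli argument yields $\Leb(\limsup V_{n_k}) = 0$, while Fatou's lemma applied to the complements gives $\mu(\limsup V_{n_k}) = 1$, witnessing singularity. With this characterization in hand,
\[
\{\mu \in \Mp(0,1) : \mu \text{ is singular}\} = \bigcap_{n \in \mathbb{N}}\ \bigcup_{\substack{V \in \mathcal{V} \\ \Leb(V) < 1/n}} \{\mu : \mu(V) > 1 - 1/n\}
\]
is exhibited as a $G_\delta$ set, since each inner union is a countable union of open sets.

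The main obstacle is the characterization used in (i): one must simultaneously approximate a null set in the Lebesgue sense and concentrate almost all of $\mu$-mass on a rational open set, and the reverse implication has to pass to a summable subsequence so that the fixed countable family $\mathcal{V}$ is rich enough to detect singularity. Once this reformulation is in place, both parts are formal consequences of weak$^*$ lower semi-continuity of $\mu \mapsto \mu(U)$ on open $U$.
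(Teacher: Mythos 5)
Your argument is correct. Note, however, that the paper contains no proof of this statement at all: it is imported wholesale by citation to Lenz--Stollmann, so there is nothing internal to compare against. What you have written is a sound self-contained derivation, and it is close in spirit to the original source, whose key tool is likewise the weak$^*$ lower semi-continuity of $\mu\mapsto\mu(U)$ for open $U$. Part (ii) is exactly the standard argument. For part (i), your characterization of singularity is the right one and both implications go through: the forward direction via outer regularity of $\Leb$ plus continuity from below of $\mu$ on the countably many disjoint intervals making up the open cover (to pass to a finite rational subunion losing less than $1/(2n)$ of mass on each side of the budget), and the backward direction via the summable subsequence $n_k=2^k$, Borel--Cantelli for $\Leb(\limsup V_{n_k})=0$, and the decreasing intersection $\bigcap_m\bigcup_{k\ge m}V_{n_k}$ for $\mu(\limsup V_{n_k})=1$. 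The one point worth making explicit is that the subsequence must be chosen so that \emph{both} $\sum_k\Leb(V_{n_k})<\infty$ and $\mu(V_{n_k})\to 1$ hold, which your choice delivers since $\Leb(V_n)<1/n$ and $\mu(V_n)>1-1/n$ simultaneously. The only difference from the cited source is scope: Lenz and Stollmann prove singularity with respect to an arbitrary fixed locally finite measure on a locally compact, $\sigma$-compact metric space, whereas your argument is tailored to $\Leb$ on $(0,1)$; it would generalize verbatim to any fixed outer regular Borel measure, which is all this paper needs.
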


\begin{theorem}\label{Th:Main_result}
For the generic system $f \in \RDSI$, the unique stationary measure $\mu_f$ is singular and has full support.
\end{theorem}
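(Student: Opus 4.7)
The plan is to combine all the machinery from the preceding sections through a standard Baire category argument. Let $\mathcal{S}\subset\Mp(0,1)$ denote the set of singular probability measures and $\mathcal{F}\subset\Mp(0,1)$ the set of fully supported probability measures. By Theorem~\ref{Fact:Generic_measures}, both $\mathcal{S}$ and $\mathcal{F}$ are $G_\delta$ subsets of $\Mp(0,1)$ in the weak$^*$ topology.

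Next, I would pull these sets back through the operator $\psi:\RDSI\to\Mp(0,1)$ of Theorem~\ref{Th:mu_start_is_continuous}. Since $\psi$ is continuous and a continuous preimage of a $G_\delta$ set is $G_\delta$, the sets
\[
\mathcal{S}_{\RDSI}:=\psi^{-1}(\mathcal{S})\quad\text{and}\quad \mathcal{F}_{\RDSI}:=\psi^{-1}(\mathcal{F})
\]
are $G_\delta$ subsets of $\RDSI$. By Corollary~\ref{Cor:Dense_set_with_singular_measure}, $\mathcal{S}_{\RDSI}$ is dense, and by Theorem~\ref{Th:Dense_set_with_full_support}, $\mathcal{F}_{\RDSI}$ is dense. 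So each of these is a dense $G_\delta$ subset of $\RDSI$.

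Since $\RDSI$ is a Baire space by Theorem~\ref{Th:RDSI_is_a_Baire_space}, the intersection $\mathcal{S}_{\RDSI}\cap\mathcal{F}_{\RDSI}$ is again a dense $G_\delta$, hence residual. For every $f$ in this residual set, $\mu_f$ is both singular and has full support. Non-atomicity is automatic and requires no genericity: by Theorem~\ref{Th:Existence_of_unique_stationary_measure} every $f\in\RDSI$ already has an atomless stationary measure. Combining these three properties yields the claim stated in the abstract as well.

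There is no real obstacle remaining at this stage: all difficult work has been absorbed into the earlier results. The only point worth checking carefully is that the map $\psi$ takes values in $\Mp(0,1)$ (not in some larger space) so that the $G_\delta$ sets from Theorem~\ref{Fact:Generic_measures} apply directly; this is guaranteed by Theorem~\ref{Th:Existence_of_unique_stationary_measure}, which gives a unique stationary \emph{probability} measure on $(0,1)$ for each $f\in\RDSI$.
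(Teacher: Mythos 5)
Your proposal is correct and follows essentially the same route as the paper's own proof: pull back the $G_\delta$ sets of singular and fully supported measures from Theorem~\ref{Fact:Generic_measures} through the continuous map $\psi$, invoke the density results of Section~\ref{sec:dense_subsets}, and intersect the two dense $G_\delta$ sets in the Baire space $\RDSI$. The added remark that non-atomicity holds for \emph{every} $f\in\RDSI$ by Theorem~\ref{Th:Existence_of_unique_stationary_measure} matches the paper's closing observation.
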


\begin{proof}
Let $\mathcal{M}_{\text{sing}} \subset \Mp(0,1)$ denote the set of singular measures $\mu\in\Mp(0,1)$. For $f\in\RDSI$ the stationary measure is $\mu_f=\psi(f)$, and so $\mu_f$ is singular if and only if $\psi(f)\in\mathcal{M}_{\text{sing}}$, that is,
\begin{equation*}
\{ f \in \RDSI : \mu_f\ \text{is singular} \} = \psi^{-1}(\mathcal{M}_{\text{sing}}).
\end{equation*}
By Theorem~\ref{Fact:Generic_measures} (i), the set $\mathcal{M}_{\text{sing}}$ is a $G_\delta$ in $\Mp(0, 1)$. Taking the pre-image of this set under the continuous function $\psi$ we conclude that $\{ f \in \RDSI : \mu_f\ \text{is singular} \}$ is a $G_\delta$ in $\RDSI$. But this set is dense by Corollary~\ref{Cor:Dense_set_with_singular_measure}.

Again let $\mathcal{M}_{\text{f.s.}}$ denote the set of measures $\mu\in\Mp(0,1)$ which have full support. Then
\begin{equation*}
\{ f \in \RDSI : \mu_f\ \text{has full support} \} = \psi^{-1}(\mathcal{M}_{\text{f.s.}}).
\end{equation*}
By Theorem~\ref{Fact:Generic_measures} (ii), the set $\mathcal{M}_{\text{f.s.}}$ is a $G_\delta$ in $\Mp(0, 1)$. Therefore $\{ f \in\RDSI : \mu_f\ \text{has full support}\}$ is a $G_\delta$ in $\RDSI$, and this set is dense by Theorem~\ref{Th:Dense_set_with_full_support}.

This completes the proof, since the intersection of two dense $G_\delta$ sets is again a dense $G_\delta$ set and a dense $G_{\delta}$ subset of a Baire space is residual by definition.
\end{proof}

Together with Theorem~\ref{Th:Existence_of_unique_stationary_measure}, we may now conclude that a typical random dynamical system in $\RDSI$ has a unique stationary measure in $\Mp(0, 1)$ which is singular, non-atomic and has full support.

\subsection*{Data Availability Statement}
Data sharing not applicable to this article as no datasets were generated or analysed during the current study.


\end{document}